\newcommand{\fb}[1]{{\color{black}#1}}
\newcommand{\A}[1]{{\color{black}#1}}
\newcommand{\B}[1]{{\color{black}#1}}
\newcommand{\BB}[1]{{\color{black}#1}}
\newcommand{\both}[1]{\color{black}{#1}}
\begin{document}
\title{POD-Galerkin Model Order Reduction for Parametrized Time Dependent Linear Quadratic Optimal Control Problems in Saddle Point Formulation}
\author[$\sharp$]{Maria Strazzullo}
\author[$\sharp$]{Francesco Ballarin}
\author[$\sharp$]{Gianluigi Rozza}
\affil[$\sharp$]{mathlab, Mathematics Area, International School for Advanced Studies (SISSA), Via Bonomea 265, I-34136 Trieste, Italy}
\date{}                     
\setcounter{Maxaffil}{0}
\renewcommand\Affilfont{\itshape\small}
\maketitle

\begin{abstract}
\no In this work we recast parametrized time dependent optimal control problems governed by partial differential equations in a saddle point formulation and we propose reduced order methods as an effective strategy to solve them. Indeed, on one hand parametrized time dependent optimal control is a very powerful mathematical model which is able to describe several physical phenomena; on the other hand, it requires a huge computational effort. Reduced order methods are a suitable approach to have rapid and accurate simulations. We rely \fb{on} POD-Galerkin reduction over the physical and geometrical parameters of the optimality system in a space-time formulation. Our theoretical results and our methodology are tested on two examples: a boundary time dependent optimal control for a Graetz flow and a distributed optimal control governed by time dependent Stokes equations. With these two experiments the convenience of the reduced order modelling is \fb{further extended to} the field of time dependent optimal control.
\end{abstract}

\maketitle

\vspace{.5cm}
\no \textbf{Keywords}:
reduced order methods, proper orthogonal decomposition, time dependent parametrized optimal control problems, time dependent PDEs state equations, saddle point formulation.

\no \textbf{AMS}: 
49J20, 76N25, 35Q35
\section{Introduction}
\label{intro}

Parametrized optimal control problems (OCP($\boldsymbol{\mu}$)s) governed by parametrized partial differential equations (PDE($\boldsymbol{\mu}$)s) \fb{play a ubiquitous role in several applications, yet are} very challenging to analyse theoretically and \fb{simulate} numerically.
In a parametrized setting, where a parameter $\boldsymbol{\mu} \in \mathscr P \subset \mathbb R^d$ could represent physical or geometrical features, \ocp s can be helpful in order to describe and simulate different configurations of several physical and natural phenomena. \\
Indeed, optimal control framework is very versatile and it has been exploited in many contexts and fields: from shape optimization, see e.g. \cite{delfour2011shapes,makinen,mohammadi2010applied}, to fluid dynamics, see e.g. \cite{dede2007optimal,negri2015reduced,optimal,de2007optimal}, from biomedical applications \fb{\cite{Ballarin2017,LassilaManzoniQuarteroniRozza2013a,ZakiaMaria,Zakia}} to environmental ones \cite{quarteroni2005numerical,quarteroni2007reduced,StrazzulloBallarinMosettiRozza2018,ZakiaMaria}.
Even if \ocp s are a very powerful tool widespread in many research fields, \fb{their complexity and computational demanding simulations still limit their applicability}. \fb{Furthermore, the required computational efforts get even larger} if the optimization is constrained to a time dependent
PDE($\bmu$). Surely, time optimization makes the mathematical model more complete and it arises in many applications, see e.g. \cite{HinzeStokes,Iapichino2,leugering2014trends,seymen2014distributed,Stoll1,Stoll}.

\no \fb{The computational effort required for} \ocp s simulations becomes \fb{unbearable} and not viable when the time dependent optimal control depends on a variety of physical and/or geometrical parameters: indeed, in this context, many configurations are studied for several values of $\pmb \mu \in \mathscr P$, increasing the necessary time in order to understand the phenomena behaviour with respect to different parameters. A rapid and suitable approach to manage this drawback \fb{is to} rely on reduced order methods (ROMs), which allow us to solve the parametrized optimality system  in a low dimensional framework, reducing the computational costs, see for example \cite{hesthaven2015certified,prud2002reliable,RozzaHuynhManzoni2013,RozzaHuynhPatera2008}.
Literature is quite complete with respect to the applications of ROM techniques for parametrized steady \ocp s, the interested reader may refer to the following far-from-exhaustive list \cite{bader2016certified,bader2015certified,dede2010reduced,gerner2012certified,Iapichino1,karcher2014certified,karcher2018certified,kunisch2008proper,negri2015reduced,negri2013reduced,quarteroni2007reduced}, where reduced optimal control is treated and analyzed for several state equations and managed with different approaches and methodologies.\\
\fb{We seek therefore to extend} the consolidated knowledge about ROM for steady \ocp s to time dependent \ocp s.
\\

\no Our purpose is to adapt the formulation presented in \cite{negri2015reduced,negri2013reduced} for steady \ocp s to quadratic optimization models constrained to linear time dependent PDE($\bmu$)s. We propose two simple test cases in order to validate our theoretical framework:
\begin{itemize}
\item a time dependent boundary optimal control problem for Graetz flow;
\item a time dependent distributed optimal control problem for Stokes equations.
\end{itemize}
Both the proposed examples have geometrical and physical parametrization.
\\ The main merit of this work is to recast linear quadratic time dependent \ocp s in the very well known and general framework of saddle point problems. To the best of our knowledge, another element of novelty is the numerical simulation \fb{and subsequent reduction} of a time dependent \ocp $\:$ governed by Stokes equations. \both{The proposed techniques have some limitations in practical applications: indeed, we rely on direct solvers for very complex linear systems. This limits our capability to increase the resolutions both in time and space for simulations. This issue could be overcome using more computational resources and through preconditioned Krylov solvers and multigrid approaches: we refer the interested reader to \cite{benzi_golub_liesen_2005,schoberl2007symmetric,Stoll1,Stoll}}.  \BB{We highlight that the proposed iterative approaches may only lighten the computational effort needed for the linear system at hand during the so-called offline phase of the ROM. Due to this, we are aware that the numerical simulations of time dependent \ocp s in the all-at-once saddle point framework could still be unfeasible for actual real-time applications because of the extremely large CPU time required by the offline stage. Nonetheless, this paper aims at showing how ROMs can recover the behaviour of full order simulations with a substantial reduction of computational time. Indeed, we validate our methodology thanks to several experiments for academic test cases. This is only a first step towards the study of new approaches for the application of ROMs to more general and complicated time dependent \ocp s in saddle point formulation}.
\\

\no This work is outlined as follows. In section 2, we rely on theoretical formulation for time dependent optimal control presented in
\cite{hinze2008optimization} and \cite{troltzsch2010optimal}, then, we prove the well-posedness of linear quadratic parabolic time dependent \ocp s in a saddle point framework verifying the standard hypotheses of Brezzi's theorem \cite{boffi2013mixed,Brezzi74}. A brief introduction of the full order Finite Element (FE) approximation precedes the discretization of the optimality system, treated exploiting an all-at-once approach proposed in \cite{Stoll1,Stoll,Yilmaz}. Section 3 introduces the reduced order approximation following \cite{hesthaven2015certified,karcher2014certified} and how \fb{to} apply Proper Orthogonal Decomposition (POD) sampling algorithm for OCP($\boldsymbol{\mu}$)s in saddle point framework with a brief mention of aggregated reduced space strategy used in \cite{dede2010reduced,negri2015reduced,negri2013reduced} and affine decomposition, see e.g. \cite{hesthaven2015certified}. In section 4, numerical results for geometrical and physical parametrization of a boundary optimal control problem for Graetz flow are shown: it is a time dependent version of an \ocp $\:$ presented in \cite{negri2013reduced}. In section 5, we analyze the saddle point structure of time dependent \ocp s governed by Stokes equations. We prove the well-posedness of the mixed problem and we briefly show the discretization techniques used extending the strategies already described in sections 2 and 3. In section 6 we will present a distributed optimal control problem governed by time dependent Stokes equations having consistent results with respect \fb{to} those in \cite{HinzeStokes,Stoll}. Conclusions follow in section 7.

\section{Problem Formulation and Full Order Discretization for Linear Quadratic  Parabolic Time Dependent \ocp s}

In this section parabolic time dependent \ocp s are presented in saddle point formulation. To the best of our knowledge, the saddle point theoretical analytic framework in the context of time dependent \ocp s is a novelty element, even if it is quite a standard approach for stationary linear state equations \cite{karcher2014certified,negri2015reduced,negri2013reduced,rozza2012reduction}. First of all, the well-posedness of the saddle point structure is proved in a space-time formulation. Then, we will introduce the full order discretized problem: the high fidelity approximation is presented following the all-at-once structure exploited in \cite{HinzeStokes,schoberl2007symmetric,Stoll1,Stoll}.

\subsection{Saddle Point Structure of Linear Quadratic Parabolic Time Dependent \ocp s: Theoretical Analysis}
\label{problem}
This section aims at recasting \ocp s governed by linear time dependent PDE($\bmu$)s in a saddle point formulation. In the systems \fb{at hand} the parameter $\pmb \mu \in \mathscr P \subset \mathbb R^p$ could be physical or geometrical. In the following analysis we assume a physical parametrized PDE($\bmu$), but the reached conclusions do not change for the case of geometrical parametrization. \\
Let us consider $\Omega \subset \mathbb R^n$ an open and bounded regular domain and the time interval $\B{(0,T)}$. Let us indicate with $\Gamma_D$ and $\Gamma_N$ the portions of the boundary $\partial \Omega$ where Dirichlet and Neumann boundary conditions are applied, respectively. Let $Y$ and $H$ be two separable Hilbert spaces which verify $Y \hookrightarrow H \hookrightarrow Y\dual$ and  let us consider other two Hilbert spaces $U$ and $Z \supseteq Y$.
Let us define the Hilbert spaces $\Cal Y= L^2(0,T; Y) $, $\Cal Y \dual = L^2(0,T; Y\dual) $, $\Cal U = L^2(0,T; U)$ and $ \Cal Z :=  L^2(0,T; Z) \supseteq \Cal Y$.
Finally, we define the space
$
\Cal Y_t :=
\displaystyle \Big \{
y \in \Cal Y \;\; \text{s.t.} \;\;  \dt{y} \in \Cal Y\dual
\Big \}.
$
\no The problem to be solved is the following: for a given $\pmb \mu \in \mathscr P$ find the \fb{pair}
$(y,u) \in \fb{\Cal Y_t} \times \Cal U$ which solves
\begin{equation}
\label{J_functional}
\min_{(y,u) \in \Cal Y_t \times \Cal U}
\frac{1}{2} \intTimeSpace{(y - y_d(\pmb \mu))^2} 
+ \alf \intTimeSpace {u^2}
\end{equation}
under a \fb{constraint} of the form
\begin{equation}
\label{eq_time_strong}
\begin{cases}
\vspace{.1cm}
\displaystyle \dt {y} + \Cal {D}_a(\pmb \mu)y = \Cal{D}_c(\pmb \mu)u + f& \text{in } \Omega \times \B{(0,T)}, \\
 \displaystyle \dn{y} = 0 & \text{on  } \Gamma_N \times \B{(0,T)}, \\
\displaystyle y = g & \text{on  } \Gamma_D \times \B{(0,T)}, \\
y(0) = y_0 & \text{in } \Omega,
\end{cases}
\end{equation}
where $\alpha > 0 $ is a fixed constant, $\Cal D_a(\bmu)$ and $\Cal D_c(\bmu)$ are general parametrized differential operators associated to state and control, respectively, and $y_d(\pmb \mu)$ is an observation in the space $\Cal Z$. Furthermore, \fb{for the sake of simplicity} we assume that $\norm{y_0}^2_{H} = 0$ and $\Cal Z = \Cal Y_t$. 
Problems of this kind have been covered extensively in classical references as \cite{lions1971,troltzsch2010optimal}, for example, where the topic of optimal control is treated in its entirety. Basically, we are adding a parametrization framework to this very versatile mathematical model, in order to study different physical and/or geometrical configurations. Indeed, we underline that we are actually looking for solutions which are parameter dependent, but for the sake of notation we write $y := y(\pmb \mu)$ and $u := u(\pmb \mu)$: depending on the case, we will omit the parameter dependence from now on.


\no Moreover, we suppose that for every $\bmu \in \mathscr P$ and every control variable $u$ there exists an unique solution $y:= y(u)$.
The time dependent state equation \eqref{eq_time_strong} can be expressed in the following weak formulation:

\hspace{-1cm}\begin{equation}
\label{eq_time_weak}
\begin{cases}
\displaystyle \intTime{ \Big \la \dt y, q \Big \ra_{Y\dual, Y}}
+ \intTime {a (y, q; \bmu)} =
\intTime {c(u, q; \bmu)}
+\intTime{ \la G(\bmu), q \ra_{Y\dual, Y}} & \forall q \in \Cal Y_t, \\
y(0) = y_0  & \text{in } \Omega,
\end{cases}
\end{equation}
where $a \goesto{\Cal Y_t}{\Cal  Y_t}{\mathbb R}$ and $c \goesto{\Cal U}{\Cal Y_t}{\mathbb R}$ are the bilinear forms \fb{associated to $\Cal {D}_a(\pmb \mu)$} and $\Cal {D}_c(\pmb \mu)$, respectively, while $G \in \Cal Y\dual $ \fb{collects} forcing and boundary terms \fb{of the state equation}.
In our applications \fb{it will always be the case} $\displaystyle c(u,q;\bmu) = \intSpace{uq} $.
\fb{We further remark that we consider $q \in \Cal Y_t$ rather than $q \in \Cal Y$ as it will be convenient to restrict $q$ to $\Cal Y_t$ for a proper definition of the adjoint variable.}
Let us define the following bilinear forms
\begin{align}
m \goesto {\Cal Y_t}{\Cal Y_t}{\mathbb R}  \hspace{1cm} & m(y,z) =  \intTimeSpace{yz},\\
n \goesto {\Cal U}{\Cal U}{\mathbb R} \hspace{1cm} & n(u,v) = \intTimeSpace{uv},
\end{align}
which represent the objective for the state variable and for the control variable in the whole time interval, respectively.
Thus, let us define the functional
\begin{equation}
\label{eq_functional}
J(y,u; \bmu) = \frac{1}{2} m(y - y_d(\bmu), y - y_d(\bmu))
+ \alf n(u,u).
\end{equation}
The \ocp   $\:$ \fb{reads} as follows:  given $\bmu \in \mathscr P$, find the solution of

\begin{equation}
\label{general_problem}
\min_{(y, u) \in \Cal Y_t \times \Cal U} J(y, u; \bmu) \spazio \text{such that \eqref{eq_time_weak} is satisfied}.
\end{equation}
In order to set \fb{the} problem in a mixed formulation, we need to define the state-control product space $\Cal X = \Cal Y_t \times \Cal U$. Given $x = (y,u)$ and $w = (z,v)$ elements of $\Cal X$, the scalar product of $\Cal X$ is defined by
$
(x,w)_{\Cal X} = (y,z)_{\Cal Y_t} + (u,v)_{\Cal U},
$
that induces the norm $\norm{\cdot}_{\Cal X}$, since
$(\cdot, \cdot)_{\Cal Y_t}$ and $(\cdot, \cdot)_{\Cal U}$ define the following norms, respectively:
\begin{align*}
\norm{y}_{\Cal Y_t}^2 & = \norm{y}_{\Cal Y}^2 +\parnorm{\dt y}_{\Cal Y\dual}^2
=  \intTime{\norm{y}_Y^2} +  \intTime{\parnorm{\dt y}_{Y\dual}^2}, \qquad
 \norm{u}_{\Cal U}^2 = \intTime{\norm{u}_U^2}.
\end{align*}
Moreover, in the following, we need the forms:
\begin{align*}
\Cal A \goesto {\Cal X}{\Cal X}{\mathbb R}\A{,} & \spazio
\Cal A(x,w) = m(y,z) +\alpha n(u,v)\A{,} &
\quad \forall x, w \in \Cal X, \\
\Cal B \goesto {\Cal X}{\Cal Y_t}{\mathbb R}\A{,} & \spazio
\Cal B(w, q; \bmu) = \intTime { \Big \la \dt z , q \Big \ra } + \intTime{a(z, q; \bmu)} -
\intTime{c(v, q; \bmu)}\A{,}&
\quad \forall w \in \Cal X, \forall q \in \Cal Y_t,\\
F(\bmu) \in \Cal X\dual\A{,} & \spazio
\intTime{\la F(\bmu), w \ra} = m(y_d(\bmu),z)\A{.} &
\quad \forall w\in \Cal X. \\\vspace{-7mm}
\end{align*}

\noindent We underline that for the analysis and for sake of notation, the problem we propose has a distributed control, but it can be extended also for boundary control if one defines:
$$\norm{u}_{\Cal U}^2 = \intTime{\norm{u}_{U(\Gamma_c)}^2},$$
where $\Gamma_c \subseteq \partial \Omega$ is the boundary portion where the control is applied. An application of boundary control will be presented in Section \ref{ADR_OCP}. \\
As we did for the state and the control variables, for the sake of notation we define $p := p(\bmu)$. In order to build the optimality system, first of all we construct the following Lagrangian functional

\begin{equation}
\label{functional}
\Lg (y,u,p; \bmu) = \Cal J((y,u); \bmu) + \Cal B((y,u), p; \bmu)
+ \intTime{\la G(\bmu), p \ra},
\end{equation}
where
\begin{equation}
\label{cal_functional}
\Cal J((y,u), \bmu) = \half  \Cal A (x,x; \pmb \mu)  - \intTime{\la F(\bmu), x \ra},
\end{equation}
recalling that $ x= (y,u)$.
Then, we perform a differentiation by the adjoint variable $p$, the state variable $y$ and the control $u$.
The minimization of \eqref{functional} is equivalent to find the solution of the following system: given $\pmb \mu \in \mathscr P$, find $(x, p) \in \Cal X \times \Cal Y_t$ such that
\begin{equation}
\label{optimality_system}
\begin{cases}
D_y\Lg(y, u, p; \pmb \mu)[z] = 0 & \forall z \in \Cal Y_t,\\
D_u\Lg(y, u, p; \pmb \mu)[v] = 0 & \forall v \in \Cal U,\\
D_p\Lg(y, u, p; \pmb \mu) [q]= 0 & \forall q \in \Cal Y_t.\\
\end{cases}
\end{equation}

\no The framework introduced is totally general and it also holds for nonlinear time dependent problems.
\\ We now focus on the  case of \tbf{parabolic linear governing equations}. In this case, the system of equations related to \eqref{optimality_system} could be written  in the following form:

\begin{equation}
\label{linear_optimality system}
\begin{cases}
\Cal A (x,w)  +
	\displaystyle \Cal B(w, p; \bmu) =\intTime{ \la F(\pmb \mu), w \ra} & \forall w \in \Cal X, \\
	\displaystyle  \Cal B(x, q; \bmu) = \intTime{\la G(\pmb \mu), q \ra} & \forall q \in \Cal Y_t.\\
\end{cases}
\end{equation}

 \no As one can see, the classical structure of saddle point formulation that characterizes steady linear quadratic \ocp s is preserved also in the linear time dependent case. We now want to provide the \fb{well-posedness} of the problem \eqref{linear_optimality system} through the fulfillment of  \cite[\A{Proposotion} 1.7]{bochev2009least}, a \A{Proposotion} based on Brezzi's theorem \cite{boffi2013mixed,Brezzi74}:
\begin{prop}
\label{prop}
Assume that the Brezzi's theorem is verified, i.e.
\begin{enumerate}
\item $\Cal A \cd$ is continuous and weakly coercive on the kernel of $\Cal B(\cdot, \cdot; \pmb \mu)$, that we indicate with
$\Cal X_0 \subset \Cal X$, i.e.  for every $x, w \in \Cal X_0$ it holds:
\begin{equation}
\label{weak_coercivity}
\inf_{w \in \Cal X_0 \setminus \{0\}} \sup_{x \in \Cal X_0 \setminus \{0\}} \frac{\Cal A (x,w)}{\norm{x}_{\Cal X_0}\norm{w}_{\Cal X_0}} > 0
\quad \text{ and } \quad
\inf_{x \in \Cal X_0 \setminus \{0\}} \sup_{w \in \Cal X_0 \setminus \{0\}} \frac{\Cal A (x,w)}{\norm{x}_{\Cal X_0}\norm{w}_{\Cal X_0}} > 0.
\end{equation}
\item $\Cal B (\cdot, \cdot; \pmb \mu)$ is continuous and satisfies the \textit{inf-sup condition}, i.e. for every $x \in \Cal X$ and $q \in \Cal Y_t$ the following inequality is verified:
\begin{equation}
\label{infsup_2}
\inf_{q \in \Cal Y_t \setminus \{0\}} \sup_{x \in \Cal X  \setminus \{0\}} \frac{\Cal B(x, q; \bmu)}{\norm{x}_{\Cal X}\norm{q}_{\Cal Y_t}} = \beta(\bmu) > 0.
\end{equation}
\end{enumerate}
Let us assume further that the bilinear form $\Cal A\cd$ is symmetric, non-negative, and coercive on $\Cal X_0$, then the minimization of the functional \eqref{cal_functional} constrained to equation \eqref{eq_time_strong} and the resolution of the saddle point problem \eqref{linear_optimality system} are equivalent.
\end{prop}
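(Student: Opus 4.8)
The plan is to read the statement as the classical equivalence between a linearly constrained quadratic minimization and its first-order optimality (saddle point) system, with the dual variable $p$ playing the role of a Lagrange multiplier; the two Brezzi hypotheses guarantee that \eqref{linear_optimality system} is itself well posed, and the extra structural assumptions on $\Cal A$ (symmetry, non-negativity, coercivity on $\Cal X_0$) are exactly what is needed to tie its unique solution to the minimizer. First I would fix the admissible set
\[
V_G := \Big\{ x \in \Cal X \ : \ \Cal B(x, q; \bmu) = \intTime{\la G(\bmu), q \ra} \ \ \forall q \in \Cal Y_t \Big\},
\]
and observe that it is a nonempty closed affine subspace of $\Cal X$ whose direction space is precisely the kernel $\Cal X_0$: the inf-sup condition \eqref{infsup_2} makes the operator induced by $\Cal B$ surjective onto $\Cal Y_t\dual$, so $V_G \neq \emptyset$, while any two of its elements differ by an element of $\ker \Cal B = \Cal X_0$. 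Restricting $\Cal J$ to $V_G$ and using that $\Cal A$ is symmetric, non-negative and coercive on $\Cal X_0$, the restricted functional is strictly convex and coercive along $\Cal X_0$, hence admits a unique minimizer $\bar x \in V_G$.

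Next I would establish the two implications separately. For the necessity part (minimizer $\Rightarrow$ saddle point), given the minimizer $\bar x$ and any $x_0 \in \Cal X_0$, the perturbation $\bar x + t x_0$ stays in $V_G$, so $t \mapsto \Cal J(\bar x + t x_0)$ is minimized at $t = 0$; differentiating and using the symmetry of $\Cal A$ gives $\Cal A(\bar x, x_0) - \intTime{\la F(\bmu), x_0 \ra} = 0$ for every $x_0 \in \Cal X_0$. Thus the bounded linear functional $w \mapsto \Cal A(\bar x, w) - \intTime{\la F(\bmu), w\ra}$ annihilates $\Cal X_0$. Invoking the inf-sup condition through the closed range theorem, which identifies the annihilator of $\Cal X_0$ with the range of the adjoint of the operator associated to $\Cal B$, I would produce a $p \in \Cal Y_t$ with $\Cal A(\bar x, w) + \Cal B(w, p; \bmu) = \intTime{\la F(\bmu), w\ra}$ for all $w \in \Cal X$; together with $\bar x \in V_G$ this is exactly \eqref{linear_optimality system}.

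For the sufficiency part (saddle point $\Rightarrow$ minimizer), starting from a solution $(\bar x, p)$ of \eqref{linear_optimality system}, the second equation gives $\bar x \in V_G$. For an arbitrary competitor $x = \bar x + x_0 \in V_G$ with $x_0 \in \Cal X_0$, expanding the quadratic by symmetry yields $\Cal J(x) = \Cal J(\bar x) + \big( \Cal A(\bar x, x_0) - \intTime{\la F(\bmu), x_0\ra}\big) + \half \Cal A(x_0, x_0)$. Testing the first equation of \eqref{linear_optimality system} with $w = x_0$ and using $\Cal B(x_0, p; \bmu) = 0$, because $x_0 \in \Cal X_0 = \ker \Cal B$, makes the middle term vanish, and the non-negativity of $\Cal A$ forces $\Cal J(x) \ge \Cal J(\bar x)$; coercivity on $\Cal X_0$ then upgrades this to strict optimality and uniqueness.

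The step I expect to be the genuine obstacle is the construction of the multiplier $p$ in the necessity direction: passing from ``the residual functional vanishes on $\Cal X_0$'' to ``the residual functional lies in the range of $\Cal B\dual$'' is not automatic in infinite dimensions and is precisely where the inf-sup condition \eqref{infsup_2} (equivalently, the closed range of the constraint operator) must be used. The remaining verifications, namely convexity, the expansion of the quadratic, and the kernel identity $\Cal B(x_0, p; \bmu) = 0$, are routine once symmetry and non-negativity of $\Cal A$ are in hand.
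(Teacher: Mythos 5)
Your proof is correct, but note that the paper itself never proves this Proposition: it is invoked directly from the literature, namely \cite[Proposition 1.7]{bochev2009least}, and all of the paper's own work (Assumptions \ref{ass_1}, Lemmas \ref{lemma_1_6}--\ref{lemma_existence}, Theorem \ref{Brezzi_continuous}) goes into \emph{verifying the hypotheses} of this Proposition for the time dependent \ocp, not into proving the Proposition. What you have written is a correct, self-contained rendition of the standard argument behind that cited result: (i) the inf-sup condition \eqref{infsup_2} makes the constraint operator surjective onto $\Cal Y_t\dual$, so the admissible affine set is nonempty with direction space $\Cal X_0$; (ii) symmetry, continuity and coercivity of $\Cal A\cd$ on $\Cal X_0$ give a unique minimizer on that affine set; (iii) the Euler--Lagrange condition along kernel directions, combined with the closed range theorem identifying the annihilator of $\Cal X_0$ with the range of the adjoint constraint operator (closed precisely because of \eqref{infsup_2}), produces the multiplier $p$; (iv) the quadratic expansion and non-negativity of $\Cal A\cd$ give the converse implication. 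You also correctly single out the multiplier construction as the only genuinely non-routine step. The one place to tighten the exposition is the existence claim in step (ii): strict convexity alone does not yield existence of a minimizer in infinite dimensions; you need continuity together with coercivity of $\Cal A\cd$ on $\Cal X_0$ to run the direct method (or, equivalently, Lax--Milgram on the translated problem posed on $\Cal X_0$). Both properties are among the stated hypotheses, so this is an expository gap rather than a mathematical one.
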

In order to prove that the hypotheses over $\Cal A \cd$ and $\Cal B (\cdot, \cdot; \pmb \mu)$ hold for \ocp s governed by linear time dependent state equations, first of all, we require that:
\begin{Assum} the bilinear forms $c (\cdot ,\cdot; \pmb \mu), \; a (\cdot ,\cdot; \pmb \mu),
\; n (\cdot ,\cdot)$ and $m (\cdot ,\cdot)$ verify the properties
\label{ass_1}
\begin{enumerate}[(i)]
\item  $|c (u,q; \pmb \mu)| \leq c_c(\bmu) \norm{u}_{U} \norm{q}_{Y} \qquad \forall u \in \Cal U
\text{ and } \forall q \in \Cal Y_t$;
\item $|a(y,q; \pmb \mu)| \leq c_a(\bmu) \norm{y}_{Y} \norm{q}_{Y} \qquad \forall y,q \in \Cal Y_t$;
\item $a(y,y; \pmb \mu) \geq M_a(\bmu) \norm{y}_{Y}^2 \qquad \forall y \in \Cal Y_t$;
\item
\label{n}
$n \cd$ is symmetric, continuous and such that $n(u,u) \geq \gamma_n \norm{u}^2_{\Cal U}$;
\item
\label{m}
$m \cd$ is symmetric, continuous and positive definite.
\end{enumerate}
\end{Assum}
%
 \no Furthermore, we will exploit the following inequalities in order to assert the inequalities \eqref{weak_coercivity} and  \eqref{infsup_2}:
\begin{itemize}

\item[I.]  by definition, for every $y \in \Cal Y_t$ and $u \in \Cal U$  it holds:
\begin{equation}
\displaystyle \parnorm{\dt y}_{\Cal Y\dual} \leq \norm{y}_{\Cal Y_t}, \quad \norm{y}_{\Cal Y} \leq \norm{y}_{\Cal Y_t}, \quad \norm{y}_{\Cal Y_t} \leq \norm{x}_{\Cal X}  \quad \text{and} \quad \norm{u}_{\Cal U} \leq \norm{x}_{\Cal X};
\end{equation}
\item[II.] for $y$ solution of a parabolic PDE($\pmb \mu$) with forcing term $f$ and $y(0)=y_0$, there exists $k(\bmu)>0$ such that:
\begin{equation}
\label{standard_inequality}
\norm{y}_{\Cal Y_t} \leq k(\bmu)(\norm{f}_{L^2(0,T; Y)} + \norm{y_0}_Y).
\end{equation}
\end{itemize}
Two more ingredients used to guarantee the \emph{inf-sup condition} \eqref{infsup_2} are the following two lemmas.

\begin{lemma}
\label{lemma_1_6}
Let $q$ be a function in $\Cal Y_t$, then the following inequality holds:
$$
\frac{ \norm{q}_{\Cal Y}^2 }{\norm{q}_{\Cal Y_t}^2} \geq \frac{1}{6}.
 $$
\end{lemma}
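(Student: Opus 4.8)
The plan is to clear the denominator and reduce the statement to a single inequality for the time-derivative term. By the very definition of the norm on $\Cal Y_t$, $\norm{q}_{\Cal Y_t}^2 = \norm{q}_{\Cal Y}^2 + \parnorm{\dt q}_{\Cal Y\dual}^2$, so the claim $\norm{q}_{\Cal Y}^2/\norm{q}_{\Cal Y_t}^2 \ge \frac{1}{6}$ is algebraically equivalent to
\begin{equation*}
\parnorm{\dt q}_{\Cal Y\dual}^2 \le 5\,\norm{q}_{\Cal Y}^2 .
\end{equation*}
Thus the whole lemma hinges on dominating the dual norm of $\dt q$ by the spatial norm $\norm{q}_{\Cal Y}$, and the specific value $1/6$ is merely the repackaging of the constant $5$ coming out of that single estimate.

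To attack $\parnorm{\dt q}_{\Cal Y\dual}$ I would begin from the weak characterisation of the dual norm,
\begin{equation*}
\parnorm{\dt q}_{\Cal Y\dual} = \sup_{\phi \in \Cal Y \setminus \{0\}} \frac{\intTime{\la \dt q, \phi \ra}}{\norm{\phi}_{\Cal Y}},
\end{equation*}
and integrate by parts in time against $\phi \in \Cal Y_t$ (dense in $\Cal Y$). This moves the derivative off $q$ and produces the endpoint contributions $\la q(T), \phi(T) \ra_H$, $\la q(0), \phi(0) \ra_H$ together with the interior term $-\intTime{\la q, \dt \phi \ra}$. I would then control the endpoint values through the continuous embedding $\Cal Y_t \hookrightarrow C([0,T];H)$ and the identity $\norm{q(t)}_H^2 = \norm{q(0)}_H^2 + 2\int_0^t \la \dt q, q \ra \, ds$, keeping every embedding constant explicit, and finally use Young's inequality to separate each product into a piece proportional to $\norm{q}_{\Cal Y}^2$ and a piece proportional to $\parnorm{\dt q}_{\Cal Y\dual}^2$. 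Optimising the Young parameters and absorbing the $\parnorm{\dt q}_{\Cal Y\dual}^2$ contributions to the left should leave an inequality of the form $\parnorm{\dt q}_{\Cal Y\dual}^2 \le C \, \norm{q}_{\Cal Y}^2$, the target being $C \le 5$.

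The step I expect to be the real obstacle is exactly this last bound, namely dominating $\parnorm{\dt q}_{\Cal Y\dual}$ by $\norm{q}_{\Cal Y}$ for a generic $q \in \Cal Y_t$: the time derivative encodes temporal-frequency information that the purely spatial norm $\norm{q}_{\Cal Y}$ cannot detect, so a bare integration-by-parts argument does not close on its own. To push it through I would bring in the additional structure available in the excerpt — most naturally the parabolic a priori estimate $\norm{q}_{\Cal Y_t} \le k(\bmu)\big(\norm{f}_{L^2(0,T;Y)} + \norm{q(0)}_Y\big)$, which bounds the full graph norm once $q$ inherits the evolution equation, or a Poincar\'e-type control in time tied to the endpoint values. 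Identifying precisely which of these ingredients is legitimately in force for the $q$ at hand, and tracking the constants so that they collapse to the universal value $1/6$, is the delicate point of the argument.
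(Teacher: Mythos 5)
Your reduction is correct---with $\norm{q}_{\Cal Y_t}^2 = \norm{q}_{\Cal Y}^2 + \parnorm{\dt q}_{\Cal Y\dual}^2$ the lemma is algebraically equivalent to $\parnorm{\dt q}_{\Cal Y\dual}^2 \le 5\norm{q}_{\Cal Y}^2$---and so is your suspicion that this estimate cannot be closed for a generic $q \in \Cal Y_t$. In fact it is false, and therefore so is the lemma itself. Take any $\phi \in Y$, $\phi \neq 0$, and set $q(t) = \sin(\omega t)\,\phi$ with $\omega = 2\pi k/T$, $k \in \mathbb{N}$. Then $q \in \Cal Y_t$ and
\begin{equation*}
\norm{q}_{\Cal Y}^2 = \frac{T}{2}\,\norm{\phi}_{Y}^2, \qquad \parnorm{\dt q}_{\Cal Y\dual}^2 = \frac{T}{2}\,\omega^2\,\norm{\phi}_{Y\dual}^2,
\end{equation*}
so that $\norm{q}_{\Cal Y}^2/\norm{q}_{\Cal Y_t}^2 = \norm{\phi}_Y^2/\bigl(\norm{\phi}_Y^2 + \omega^2\norm{\phi}_{Y\dual}^2\bigr) \to 0$ as $k \to \infty$: no uniform positive lower bound, let alone $1/6$, can hold. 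The rescue you contemplate---the parabolic a priori estimate \eqref{standard_inequality}---is not available, because the lemma quantifies over all of $\Cal Y_t$; in its actual use inside the proof of Theorem \ref{Brezzi_continuous}, $q$ is an arbitrary test function in the inf-sup quotient and satisfies no evolution equation, so nothing controls $\dt q$ by spatial quantities.

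Consequently your attempt cannot be reconciled with the paper's proof, because the paper's proof is itself erroneous. It splits into the cases $\parnorm{\dt q}_{\Cal Y\dual} \le \norm{q}_{\Cal Y}$ (where the bound $1/2$ is immediate) and $\parnorm{\dt q}_{\Cal Y\dual} > \norm{q}_{\Cal Y}$; in the second case, writing $a = \norm{q}_{\Cal Y}^2$ and $b = \parnorm{\dt q}_{\Cal Y\dual}^2$, it asserts
\begin{equation*}
\frac{2a + b}{(a+b) + 2b} \;\ge\; \frac{\min\{1,2\}\,(a+b)}{\max\{1,2\}\,(a+b)} = \frac{1}{2},
\end{equation*}
which by cross-multiplication amounts to $3a \ge b$; the denominator estimate implicit in the $\max\{1,2\}$ even requires $b \le a$, the negation of the case hypothesis. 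Both fail whenever $b > 3a$ (e.g.\ $a=1$, $b=100$, which also violates the lemma, consistently with the counterexample above). So the obstacle you identified is not a missing trick but a genuine defect of the statement: any repair must either restrict $q$ to a class on which $\parnorm{\dt q}_{\Cal Y\dual}$ is dominated by $\norm{q}_{\Cal Y}$, or replace Lemma \ref{lemma_1_6} in the inf-sup argument of Theorem \ref{Brezzi_continuous} by a construction that does not need a uniform lower bound on $\norm{q}_{\Cal Y}/\norm{q}_{\Cal Y_t}$.
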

\begin{proof}
We divide \fb{the} proof in two cases: first of all, let \fb{us} assume
$\displaystyle \parnorm{\dt{q}}_{\Cal Y\dual} \leq \norm{q}_{\Cal Y}$ then \\ $\norm{q}_{\Cal Y_t}^2 \leq 2 \norm{q}_{\Cal Y}^2$. This leads to the following inequality:

$$ \frac{ \norm{q}_{\Cal Y}^2 }{\norm{q}_{\Cal Y_t}^2}
\geq \frac{\norm{q}_{\Cal Y}^2}{2\norm{q}_{\Cal Y}^2} =
\frac{1}{2} > \frac{1}{6}.
$$
We now want to prove a similar inequality for $q \in \Cal Y_t$ such that
$\displaystyle \parnorm{\dt{q}}_{\Cal Y\dual} > \norm{q}_{\Cal Y}$.  This assumption allows us to assert that:
\begin{equation}
\label{in1}
 2 \norm{q}_{\Cal Y}^2 < \norm{q}_{\Cal Y_t}^2  \spazio \Rightarrow \spazio
\frac{1}{2 \norm{q}_{\Cal Y}^2} > \frac{1}{\norm{q}_{\Cal Y_t}^2},
\end{equation}
and, by definition of $\norm{q}_{\Cal Y_t}$,
\begin{equation}
\label{in2}
\norm{q}_{\Cal Y_t}^2 + 2\parnorm{\dt{q}}_{\Cal Y \dual}^2
\geq 3\parnorm{\dt{q}}_{\Cal Y \dual}^2
 \Rightarrow - \displaystyle \frac{1}{\norm{q}_{\Cal Y_t}^2 + 2 \displaystyle \parnorm{\dt{q}}_{\Cal Y \dual}^2
} \geq - \frac{1}{3\displaystyle  \parnorm{\dt{q}}_{\Cal Y \dual}^2}.
\end{equation}
Then, we can prove that
\begin{align*}
\frac{ \norm{q}_{\Cal Y}^2 }{ \norm{q}_{\Cal Y_t}^2}
 & =  \frac{ 2 \norm{q}_{\Cal Y}^2 }{2\norm{q}_{\Cal Y}^2 + 2 \displaystyle \parnorm{\dt{q}}_{\Cal Y\dual}^2  }  \underbrace{\geq}_{\A{\text{for } \eqref{in1}}}
\frac{ 2 \norm{q}_{\Cal Y}^2 }{ \norm{q}_{\Cal Y_t}^2 + 2 \displaystyle \parnorm{\dt{q}}_{\Cal Y\dual}^2  } \\
& = \frac{ 2 \norm{q}_{\Cal Y}^2 + \displaystyle \parnorm{\dt{q}}_{\Cal Y \dual}^2 -  \parnorm{\dt{q}}_{\Cal Y \dual}^2  }{\norm{q}_{\Cal Y_t}^2 + 2 \displaystyle \parnorm{\dt{q}}_{\Cal Y\dual}^2  } \\
& \geq \frac{\min \{1,2 \} \norm{q}_{\Cal Y_t}^2}{\max \{ 1, 2 \} \norm{q}_{\Cal Y_t}^2} - \frac{\displaystyle \parnorm{\dt{q}}_{\Cal Y \dual}^2}{\norm{q}_{\Cal Y_t}^2 + 2 \displaystyle \parnorm{\dt{q}}_{\Cal Y\dual}^2 } \\
& \underbrace{\geq}_{\A{\text{for } \eqref{in2}}} \frac{1}{2}
- \frac{\displaystyle \parnorm{\dt{q}}_{\Cal Y\dual}^2}{3 \displaystyle \parnorm{\dt{q}}_{\Cal Y\dual}^2} = \frac{1}{2} - \frac{1}{3} = \frac{1}{6}.
\end{align*}
\cvd
\end{proof}
\noindent Let us now prove a second lemma needed to show the well-posedness of problem \eqref{linear_optimality system}.
\begin{lemma}
\label{lemma_existence}
Given a function $v \in \Cal Y_t$, there exists $\bar y \in \Cal Y_t$ which verifies:
\begin{equation}
\label{useful_eq}
 \intTime { \Big \la \dt {\bar y} , q \Big \ra }
+ \intTime{a(\bar y, q; \bmu)} = \intTime{a(v, q; \bmu)} \qquad  \forall q \in \Cal Y_t,
\end{equation}
with $\bar{y}(0)= 0$. Moreover, there exists a positive constants $\bar k(\bmu)$ such that the following inequality holds:
\begin{equation}
\norm{\bar y}_{\Cal Y_t} \leq \bar k(\bmu)\norm{v}_{\Cal Y}.
\end{equation}
\end{lemma}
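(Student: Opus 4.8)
The plan is to recognize \eqref{useful_eq} as the weak formulation of a linear parabolic problem in which $v$ plays the role of the data, and then to invoke the standard parabolic well-posedness theory together with the a priori estimate \eqref{standard_inequality}. To this end, for fixed $\bmu$ and fixed $v \in \Cal Y_t$, consider the linear functional $q \mapsto \intTime{a(v,q;\bmu)}$. By the continuity assumption (ii) of Assumption \ref{ass_1} this functional is bounded on $\Cal Y$, hence it identifies an element $f_v \in \Cal Y\dual = L^2(0,T;Y\dual)$ with $\norm{f_v}_{\Cal Y\dual} \leq c_a(\bmu)\norm{v}_{\Cal Y}$. With this identification, \eqref{useful_eq} is precisely the weak parabolic equation \eqref{eq_time_weak} for the unknown $\bar y$, with forcing term $f_v$ and homogeneous initial datum $\bar y(0)=0$.

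First I would establish existence and uniqueness of $\bar y \in \Cal Y_t$. This is the classical Lions result for abstract parabolic equations: the continuity (ii) and the coercivity (iii) of $a(\cdot,\cdot;\bmu)$ are exactly the hypotheses ensuring that, for data $(f_v,0) \in \Cal Y\dual \times H$, the operator $\partial_t + \Cal D_a(\bmu)$ with vanishing initial condition admits a unique solution in $\Cal Y_t$. This already yields the first assertion of the lemma.

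It then remains to derive the stated bound. Since $\bar y$ solves a parabolic PDE($\bmu$) with forcing $f_v$ and initial datum $y_0=0$, inequality \eqref{standard_inequality} gives $\norm{\bar y}_{\Cal Y_t} \leq k(\bmu)\norm{f_v}_{\Cal Y\dual}$, and combining this with the continuity bound $\norm{f_v}_{\Cal Y\dual} \leq c_a(\bmu)\norm{v}_{\Cal Y}$ produces $\norm{\bar y}_{\Cal Y_t} \leq k(\bmu)c_a(\bmu)\norm{v}_{\Cal Y}$, i.e. the claim with $\bar k(\bmu):=k(\bmu)c_a(\bmu)$. The one point deserving care, and the reason the estimate has the form it does, is that the right-hand side involves $\norm{v}_{\Cal Y}$ rather than the full graph norm $\norm{v}_{\Cal Y_t}$: the datum enters only through the spatial form $a(v,\cdot;\bmu)$, which by (ii) is controlled pointwise in time by $\norm{v}_{Y}$ and hence, after integration in time, by $\norm{v}_{\Cal Y}$, so that no time derivative of $v$ ever appears. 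Beyond this bookkeeping I expect no real obstacle, the lemma being essentially a repackaging of standard parabolic regularity.
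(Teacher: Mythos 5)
Your proposal is correct, and its overall strategy is the same as the paper's: read \eqref{useful_eq} as a parabolic problem whose forcing is induced by $v$, invoke an existence theorem, and conclude via the a priori estimate \eqref{standard_inequality}. The differences are in the execution, and they are worth noting because your handling of the forcing term is actually the cleaner one. For existence, the paper cites a space-time well-posedness result (property (A.3) of Theorem 5.1 in Schwab--Stevenson), which needs only $v \in \Cal Y$, whereas you invoke classical Lions theory; both are legitimate here, since Assumptions \ref{ass_1}(ii)--(iii) provide exactly the continuity and coercivity that Lions' theorem requires. For the bound, the paper writes the forcing as $\Cal D_a v$ and estimates $\norm{\bar y}_{\Cal Y_t} \leq k(\bmu)\norm{\Cal D_a v}_{L^2(0,T;H)} \leq k(\bmu)C_{\Cal D_a}\norm{v}_{L^2(0,T;H)}$, then uses $Y \hookrightarrow H$ to reach $\norm{v}_{\Cal Y}$; this implicitly assumes $\Cal D_a v(t) \in H$, an extra regularity that a generic $v \in \Cal Y$ does not enjoy for a second-order operator (for which $\Cal D_a v(t)$ naturally lies only in $Y\dual$). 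You instead keep the forcing where it naturally lives, as $f_v \in \Cal Y\dual$ with $\norm{f_v}_{\Cal Y\dual} \leq c_a(\bmu)\norm{v}_{\Cal Y}$ by continuity, and apply the energy estimate in dual-norm form, obtaining $\bar k(\bmu) = k(\bmu)c_a(\bmu)$. The only liberty you take is that \eqref{standard_inequality} as literally stated measures $f$ in $L^2(0,T;Y)$ rather than $L^2(0,T;Y\dual)$; but the dual-norm version is the standard Lions estimate, and the paper itself uses \eqref{standard_inequality} just as loosely (with an $L^2(0,T;H)$ norm) in its own proof, so this is not a gap in any meaningful sense.
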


\begin{proof}
The existence of the solution $\bar y$ is actually proposed in the proof of property (A.3) of the \A{Theorem} 5.1 in \cite{schwab2009space}, where the existence of $\bar y \in \Cal Y_t$ is guaranteed for a given $v \in \Cal Y \supset \Cal Y_t$ and for every initial condition. \\
Now, for $t \in \B{(0,T)}$ let us consider the linear operator $\Cal {D}_a(\bmu): Y \rightarrow Y\dual$ defined by
$\la \Cal {D}_a v, q \ra_{H\dual, H}$. We \fb{define $C_{\Cal {D}_a}(\bmu) = \norm{\Cal {D}_a}_{H\dual}$ which is finite since $\Cal {D}_a$ is a continuous operator}.
Furthermore, since $\bar{y}$ verifies \eqref{useful_eq}, from the standard inequality \eqref{standard_inequality} we can derive the inequality
\begin{equation*}
\norm{\bar y}_{\Cal Y_t} \leq k(\bmu)\norm{\Cal {D}_a v}_{L^2(0,T;H)} \leq k(\bmu)C_{\Cal {D}_a}\norm{v}_{L^2(0,T; H)}.
\end{equation*}
Since $Y \hookrightarrow H$, it holds $\norm{q}_{H} \leq \bar C \norm{q}_{Y}$, then, setting
$\bar k(\bmu) = k(\bmu)C_{\Cal {D}_a} \bar C$, we can prove the thesis and
\begin{equation}
\norm{\bar y}_{\Cal Y_t} \leq \bar k(\bmu)\norm{v}_{\Cal Y}.
\end{equation}

\end{proof}
\cvd \\
\no We now have all the ingredients which will help us proving the well-posedness of the saddle point system \eqref{linear_optimality system}. Indeed, the following theorem provides conditions \eqref{weak_coercivity} and \eqref{infsup_2} for linear parabolic time dependent \ocp s, i.e. the existence and the uniqueness of an optimal solution for the minimization problem defined by \eqref{linear_optimality system}.
\begin{theorem}
\label{Brezzi_continuous}
The saddle point problem \eqref{linear_optimality system} satisfies the hypotheses of \A{Proposition} \ref{prop} under the \A{Assumptions} \ref{ass_1}, i.e. it has a unique optimal solution.
\end{theorem}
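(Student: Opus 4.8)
The plan is to go hypothesis by hypothesis through Proposition \ref{prop}: for $\Cal A \cd$ I must check continuity, symmetry, non-negativity and coercivity on the kernel $\Cal X_0$, while for $\Cal B(\cdot,\cdot;\bmu)$ I must check continuity and the inf-sup condition \eqref{infsup_2}. A useful preliminary remark is that, since $\Cal A\cd$ will be shown symmetric, coercivity on $\Cal X_0$ automatically delivers the weak coercivity \eqref{weak_coercivity} by taking $x=w$ in the two suprema. Hence the genuine content of the proof reduces to exactly two estimates, namely coercivity of $\Cal A$ on $\Cal X_0$ and the inf-sup bound for $\Cal B$; everything else is a routine assembly of Assumptions \ref{ass_1} and of the inequalities I.

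I would dispose of the easy items first. Continuity of $\Cal A(x,w)=m(y,z)+\alpha n(u,v)$ follows from the continuity of $m$ and $n$ in (v) and (iv), bounded against $\norm{y}_{\Cal Y_t}\le\norm{x}_{\Cal X}$ and $\norm{u}_{\Cal U}\le\norm{x}_{\Cal X}$ from I; symmetry and non-negativity are immediate from the symmetry and positivity of $m,n$ together with $\alpha>0$. Continuity of $\Cal B$ is obtained term by term: the time-derivative term is controlled by $\parnorm{\dt z}_{\Cal Y\dual}\norm{q}_{\Cal Y}\le\norm{w}_{\Cal X}\norm{q}_{\Cal Y_t}$ using I, the form $\intTime{a(z,q;\bmu)}$ by $c_a(\bmu)\norm{z}_{\Cal Y}\norm{q}_{\Cal Y}$ via (ii), and $\intTime{c(v,q;\bmu)}$ by $c_c(\bmu)\norm{v}_{\Cal U}\norm{q}_{\Cal Y}$ via (i); summing yields $|\Cal B(w,q;\bmu)|\le C(\bmu)\norm{w}_{\Cal X}\norm{q}_{\Cal Y_t}$.

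For coercivity on the kernel, I would use that $x=(y,u)\in\Cal X_0$ precisely means $y$ is the state generated by $u$ through the weak equation \eqref{eq_time_weak} with zero initial datum (recall $\norm{y_0}_H=0$). The parabolic estimate \eqref{standard_inequality}, combined with (i) which bounds the forcing functional $q\mapsto c(u,q;\bmu)$ by $c_c(\bmu)\norm{u}_{\Cal U}$, then gives $\norm{y}_{\Cal Y_t}\le C(\bmu)\norm{u}_{\Cal U}$, so that $\norm{x}_{\Cal X}^2\le(1+C(\bmu)^2)\norm{u}_{\Cal U}^2$. On the other hand $\Cal A(x,x)=m(y,y)+\alpha n(u,u)\ge\alpha\gamma_n\norm{u}_{\Cal U}^2$ by the positivity in (v) and the estimate in (iv). Dividing the two inequalities produces coercivity on $\Cal X_0$ with constant $\alpha\gamma_n/(1+C(\bmu)^2)$. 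The conceptual point here is that belonging to $\Cal X_0$ is what lets me recover the missing control of $\norm{y}_{\Cal Y_t}$ — which the functional $m$ alone cannot see — from the control of $\norm{u}_{\Cal U}$.

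The main obstacle is the inf-sup condition, and this is where Lemmas \ref{lemma_existence} and \ref{lemma_1_6} are meant to combine. Given $q\in\Cal Y_t\setminus\{0\}$, I would test with $x=(\bar y,0)\in\Cal X$, where $\bar y$ is the solution furnished by Lemma \ref{lemma_existence} for $v=q$. By the identity \eqref{useful_eq} this choice gives $\Cal B((\bar y,0),q;\bmu)=\intTime{\dt{\bar y}}\la\cdot,q\ra+\intTime{a(\bar y,q;\bmu)}=\intTime{a(q,q;\bmu)}\ge M_a(\bmu)\norm{q}_{\Cal Y}^2$ by the coercivity (iii), while the norm bound in Lemma \ref{lemma_existence} yields $\norm{(\bar y,0)}_{\Cal X}=\norm{\bar y}_{\Cal Y_t}\le\bar k(\bmu)\norm{q}_{\Cal Y}$. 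Hence the Rayleigh quotient is at least $\bigl(M_a(\bmu)/\bar k(\bmu)\bigr)\norm{q}_{\Cal Y}$, and Lemma \ref{lemma_1_6} upgrades $\norm{q}_{\Cal Y}\ge\tfrac{1}{\sqrt 6}\norm{q}_{\Cal Y_t}$, so that \eqref{infsup_2} holds with $\beta(\bmu)=M_a(\bmu)/(\sqrt 6\,\bar k(\bmu))>0$. With all hypotheses of Proposition \ref{prop} verified, the equivalence and the existence and uniqueness of the optimal solution follow.
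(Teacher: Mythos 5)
Your proposal is correct and follows essentially the same route as the paper's own proof: continuity of $\Cal A$ and $\Cal B$ from Assumptions \ref{ass_1} and the inequalities I, coercivity of $\Cal A$ on $\Cal X_0$ via the parabolic stability bound $\norm{y}_{\Cal Y_t}\leq k(\bmu)\norm{u}_{\Cal U}$ for kernel elements, and the inf-sup bound for $\Cal B$ by testing with $x=(\bar y,0)$ from Lemma \ref{lemma_existence} with $v=q$ and then invoking Lemma \ref{lemma_1_6}. The only differences are cosmetic: you invoke $\gamma_n$ from Assumption (iv) and drop $m(y,y)\geq 0$ cleanly in the coercivity step (arguably tidier than the paper's intermediate bound on $m(y,y)$), and your cancellation of one factor $\norm{q}_{\Cal Y}$ yields the marginally sharper constant $\beta(\bmu)=M_a(\bmu)/(\sqrt{6}\,\bar k(\bmu))$ in place of the paper's $M_a(\bmu)/(6\bar k(\bmu))$.
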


\begin{proof}
Let us consider the continuity of $\Cal A \cd$.
\begin{align*}
|\Cal A(x, w)|  & \leq 
\norm {y}_{\Cal Y_t} \norm {z}_{\Cal Y_t} + \alpha  \norm {u}_{\Cal U}\norm {v}_{\Cal U}  \\
	& \leq 
\max\{1, \alpha\}\norm {x}_{\Cal X}\norm {w}_{\Cal X}.  \\
\vspace{-0.7cm}
\end{align*}
\fb{Indeed, the above inequality follows from continuity of the bilinear forms $m \cd$ and $n \cd$, which can be shown as follows:}
\begin{align*}
\Big | \intTimeSpace{yz} \Big |
& \underbrace{\leq}_{\text{Cauchy}}
 \Big ( \intTime{{\norm y}_Y  \norm z_Y }\Big) \underbrace{\leq}_{\text{Holder}} 
\sqrt{\intTime{\norm y_Y^2}} \sqrt{\intTime{\norm z_Y^2}}=
 \norm y_{\Cal Y} \norm z_{\Cal Y}  \\
					&\hspace{1.7mm}\underbrace{\leq}_{\text{II}}  
\norm y_{\Cal Y_t} \norm z_{\Cal Y_t}.
\end{align*}
The same argument can be used for $n(\cdot, \cdot)$, since
\begin{align*}
\Big |\intTimeSpace{uv} \Big | & \underbrace{\leq}_{\text{Cauchy}} \Big ( \intTime{{\norm u}_U  \norm v_U }\Big)
					 \underbrace{\leq}_{\text{Holder}} \sqrt{\intTime{\norm u_U^2}}\sqrt{\intTime{\norm v_U^2}}
					 \fb{=} \norm u_{\Cal U} \norm v_{\Cal U}.
\end{align*}
Thanks to the hypothesis \eqref{n} and \eqref{m}, $\Cal A(\cdot, \cdot)$ is symmetric,  positive definite and continuous.
\\ We can now prove the coercivity of $\Cal A$ on $\Cal X_0$. If $x \in \Cal X_0$, then it holds
 $\displaystyle \intTime { \Big \la \dt y , q \Big \ra }
+ a(y, q; \bmu) = c(u, q; \bmu)$ and then $\norm{y}_{\Cal Y_t} \leq k(\bmu)( \norm{u}_{\Cal U}+ \norm{y_0}_H) = k(\bmu)\norm{u}_{\Cal U}$ by assumption over the initial condition of $y$. Then, it holds:
\begin{align*}
\Cal A (x, x)
& =
m(y,y) + \alpha n(u,u) \geq \norm{y}_{\Cal Y}^2
+ \alf \norm{u}_{\Cal U}^2 + \alf \norm{u}_{\Cal U}^2  \\
& \geq \frac{\alpha}{2k(\bmu)^2}\norm{y}_{\Cal Y_t}^2 + \alf \norm{u}_{\Cal U}^2
\geq \min
\Big \{ \frac{\alpha}{2k(\bmu)^2}, \frac{\alpha}{2} \Big \} \norm{x}_{\Cal X}^2.
\end{align*}
Let us prove on the continuity of $\Cal B(\cdot, \cdot; \pmb \mu)$.
Exploiting the continuity of $a(\cdot, \cdot; \bmu)$ and $c (\cdot, \cdot; \bmu)$, it holds:
\begin{align*}
| \Cal B (x, q; \bmu) |
& \leq \intTime{\Big | \Big \la \dt y, q\Big \ra \Big | } +
\intTime{c_a(\bmu) \norm{y}_{Y} \norm{q}_{Y}}
+ \intTime{c_c(\bmu) \norm{u}_{U} \norm{q}_{Y}} \\
& \underbrace{\leq}_{\text{Cauchy + H\"older}} \parnorm{\dt y}_{\Cal Y\dual}\norm{q}_{\Cal Y_t}
+ c_a(\bmu) \norm{y}_{\Cal Y_t} \norm{q}_{\Cal Y_t}
+ c_c(\bmu) \norm{u}_{\Cal U} \norm{q}_{\Cal Y_t} \\
& \underbrace{\leq}_{\text{I} }\max\{1, c_a(\bmu), c_c(\bmu) \}\norm{x}_{\Cal X} \norm{q}_{\Cal Y_t},
\end{align*}
where $c_a(\bmu)$ and $c_c(\bmu)$ are the continuity constants of $a (\cdot, \cdot; \pmb \mu)$ and $c (\cdot, \cdot; \pmb \mu)$. respectively.
\\Now we focus on the fulfillment of the \textit{inf-sup condition} for the bilinear form $ \Cal B (\cdot, \cdot; \pmb \mu)$.  First of all, let us consider $q \in \Cal Y_t$ and $\bar y \in \Cal Y_t$ the solution of the problem \eqref{useful_eq} presented in Lemma \ref{lemma_existence} with $v \equiv q$.
Then,
\begin{align*}
\sup_{0 \neq x \in \Cal X } \frac{\Cal B(x, q; \bmu)}{\norm{x}_{\Cal X}\norm{q}_{\Cal Y_t}}
& = \sup_{0 \neq (y,u)} \frac{ \displaystyle \intTime { \Big \la \dt y , q \Big \ra }
+ \intTime{a(y, q; \bmu)} - \intTime{c(u, q; \bmu)}}{\norm{x}_{\Cal X}\norm{q}_{\Cal Y_t}} \\
& \underbrace{\geq}_{x = (\bar y,0)}
 \frac{\displaystyle \intTime { \Big \la \dt {\bar y} , q \Big \ra }
+ \intTime{ a(\bar y, q; \bmu)}}{\norm{\bar y}_{\Cal Y_t} \norm{q}_{\Cal Y_t} }
\displaystyle \geq  \frac{\displaystyle \intTime{a(q,q; \pmb \mu)} }{ \norm{\bar y}_{\Cal Y_t}\norm{q}_{\Cal Y_t}} \\
& \underbrace{\geq}_{\text{Lemma } \ref{lemma_existence}}
\frac{M_a(\bmu) \norm{q}_{\Cal Y}^2}{\bar k(\bmu)\norm{q}_{\Cal Y_t}^2}
 \underbrace{\geq}_{\text{Lemma } \ref{lemma_1_6}} \frac{M_a(\bmu)}{6 \bar k(\bmu)} > 0.
\end{align*}
Since we have proved the inequality for all $q \in \Cal Y_t$, it holds:
\[
\inf_{0 \neq q \in \Cal Y_t} \sup_{0 \neq x \in \Cal X } \frac{\Cal B(x, q; \bmu)}{\norm{x}_{\Cal X}\norm{q}_{\Cal Y_t}} \geq  \frac{M_a(\bmu)}{6 \bar k(\bmu)} : = \beta(\bmu) > 0.
\]
\cvd
\end{proof}
\no The theorem just presented guarantees the existence and uniqueness of an optimal solution for time dependent \ocp s.
%



\vspace{2mm}\no We now propose a brief overview on the FE approach, that we exploit as full order approximation in order to build \fb{the} reduced order model.

\no For the spatial discretization, first of all we define the triangulation $\Cal T \disc$ of $\Omega$. We can now define discretized spatial spaces as $Y \disc = Y \cap\mathscr X^{\Cal N}_r$
and $U \disc = U  \cap \mathscr X^{\Cal N}_r$, where
$$
 \mathscr X^{\Cal N}_r = \{ v \disc \in C^0(\overline \Omega) \; : \; v \disc |_{K} \in \mbb P^r, \; \; \forall K \in \Cal T \disc \}.
$$
The space $\mbb P^r$ is the space of all the polynomials of degree at most equal to $r$ and $K$ is a triangular element of $\Cal T^{\Cal N}$. Then, the function spaces considered are: $\Cal Y \disc= L^2(0,T; Y\disc) $, ${ \Cal Y\dual}^{\Cal N} = L^2(0,T; {Y\dual}^{\Cal N})$ and $\Cal U \disc = L^2(0,T; U\disc)$. The full order \fb{state variable} will be considered in
$
\Cal Y_{t}^{\Cal N} :=
\displaystyle \Big \{
y \in \Cal Y\disc \;\; s.t. \;\;  \dt{y} \in { \Cal Y\dual}^{\Cal N}
\Big \}.
$
\no As we did in section \ref{problem}, for the mixed formulation, we exploit the product space $\Cal X^{\Cal N} = \Cal Y_t \disc \times \Cal U \disc \subset \Cal X$. The Galerkin FE discretization of the saddle point problem \eqref{linear_optimality system} reads as follows: given $\boldsymbol{\mu} \in \mathscr P$, find $(x^{\Cal N}, p^{\Cal N}) \in \Cal X^{\Cal N} \times
\Cal Y^ {\Cal N}$ such that

\begin{equation}
\label{FEOCP}
\begin{cases}
 \Cal A(x^{\Cal N},w\disc) + \Cal B (w\disc,p\disc; \boldsymbol{\mu}) =
\displaystyle
\intTime{\la F(\boldsymbol{\mu}), w\disc \ra} & \forall v\disc \in \Cal X\disc, \\
\Cal B(x\disc,q\disc; \boldsymbol{\mu})=
\displaystyle
\intTime{\la G(\boldsymbol{\mu}), q\disc \ra} & \forall q\disc \in \Cal Y_t \disc,
\end{cases}
\end{equation}
where $x \disc := x \disc (\bmu)$ and $p \disc := p \disc (\bmu)$ are parameter dependent solutions.
Following the same strategies used in \A{Section} \ref{problem}, it can be shown that Brezzi's \A{Theorem} \cite{boffi2013mixed,Brezzi74} holds at the discrete level. Moreover one can prove the coercivity of the form $\Cal A \cd$: then, the minimization problem \eqref{FEOCP} is well-posed \cite{bochev2009least}.

\no In this section we introduced the general formulation for linear time dependent \ocp s and we recast it  in a saddle point system of the form \eqref{linear_optimality system}. Then, we provided a proof for \A{Theorem} \ref{Brezzi_continuous} which led to a unique optimal solution, after having  introduced all the assumptions and lemmas needed in order to have \A{Proposition} \ref{prop} verified.
Furthermore, we introduced FE element approximation as our full order discretization technique. \\
\no In the next section, the full order problem will be shown in space-time formulation \fb{following} the approaches already presented for parabolic PDEs($\pmb \mu$) in \cite{Glas2017,urban2012new,yano2014space,yano2014space1}.

\subsection{Algebraic System and All-at-Once Approach}
\label{Algebraic_parabolic}
In this section we introduce a discretized version of time dependent \ocp s. Practically, we follow the all-at-once space-time discretization already proposed in \cite{HinzeStokes,Stoll1,Stoll}. \\
We now want to consider the full order discretization of the \ocp $\;$introduced in \eqref{general_problem}. Let us suppose to have discretized the spaces considered in \A{Section} \ref{problem} with FE technique in order to solve the full order optimality system \eqref{FEOCP}. We aim at showing how the saddle point structure is reflected also in the algebraic formulation of the problem. We recall that the solution $(y,u,p)$ solves the following optimality system
\begin{equation}
\label{strong_form_optimality_system}
\begin{cases}
\displaystyle y  - \dt{p} + \Cal D\dual_a(p) = 
y_d & \text{ in } \Omega \times \B{(0,T)}, \\
\alpha u - \Cal D_c^\star( p) = 0 & \text{ in } \Omega \times \B{(0,T)}, \\
\displaystyle \dt{y} + \Cal D_a (y) - \Cal D_c (u) = g & \text{ in } \Omega \times \B{(0,T)}, \\
y(0) = y_0  & \text{ in } \Omega , \\
p(T) = 0 & \text{ in } \Omega, \\
\text{boundary conditions} & \text{ on $\partial{\Omega} \times \B{(0,T)},$}
\end{cases}
\end{equation}
where $\Cal D_a$, $\Cal D_c$, $\Cal D_a^\star$ and $\Cal D_c^\star$  are the differential operators associated to $a (\cdot, \cdot; \pmb \mu)$ and $c (\cdot, \cdot; \pmb \mu)$ and their adjoint bilinear form, respectively.
\B{The time integral} has been approximated exploiting the composite rectangle quadrature rule formula. The time interval is divided in $N_t$ sub-intervals of length $\Delta t$.
\begin{remark}
\label{time_int}
{The applied time discretization is actually equivalent to a classical implicit Euler approach \cite{eriksson1987error,yano2014space}}.
We underline that the state equation is discretized forward in time with a backward Euler method. The adjoint equation will be discretized backward in time using the forward Euler method, \B{which is equivalent to backward Euler with respect to time $T-t$, for $t \in (0,T)$}.
\end{remark}

\no Let us begin our analysis from the state equation. Let us define $\pmb y = [y_1, \dots, y_{N_t}]^T$ and $\pmb u = [u_1, \dots, u_{N_t}]^T$ and $\pmb p = [p_1, \dots, p_{N_t}]^T$, \A{where $y_i \in Y\disc$, $u_i \in U\disc$ and $p_i \in Y \disc$ for $1 \leq i \leq N_t$ are the vectors of all the discrete variables at each time step}. With $y_i, u_i$ and $p_i$ we indicate \A{the row vectors} \A{containing} the coefficients of the FE discretization for state, control and adjoint, respectively. \\The vector representing the initial condition for the state variable is
$\pmb y_0 = [y_0, 0, \dots, 0]^T$. The vector $\pmb g = [g_1, \dots, g_{N_t}]^T$ corresponds to the forcing term. Finally, the vector
$\pmb y_d = [y_{{d}_1}, \dots, y_{{d}_{N_t}}]^T$ is our discretized desired state. \\
Let $D_a$ and $D_c$ be the \A{matrices} associated to the bilinear forms applied to the basis functions of the FE spaces ${{Y}}^{\Cal N} := \text{span} \{ \phi_i, 1 \leq i \leq \Cal N \}$ and
${{U}}^{\Cal N} := \text{span} \{ \psi_i, 1 \leq i \leq \Cal N \} $, i.e.
$D_{a_{ij}} = a(\phi_i, \phi_j; \bmu)$ and $\fb{D_{c_{ij}}} = c(\psi_{i}, \phi_j; \bmu)$ for $i,j = 1, \dots, \Cal N$, respectively. For the sake of notation, the dependence of the matrix \A{on} the parameter $\bmu$ is \A{dropped}.
\\The state equation to be solved is
\[
My_k + \Delta t D_a y_k - \Delta t D_c u_k= My_{k-1}  + g_k \Delta t
\spazio \B{ \text{for } k \in \{ 1, 2, \dots, N_t\}},
\]
where $M$ is the mass matrix relative to the FE discretization.

\no In order to solve the system in an all-at-once approach
we can write:
\[
\hspace{-1.8cm}
\underbrace{
\begin{bmatrix}
M + \Delta t D_a &  & & \\
- M & M + \Delta t D_a &  & \\
& - M & M + \Delta t D_a &  & \\
&  & \ddots & \ddots & \\
&  &  & -M & M + \Delta t D_a \\
\end{bmatrix}
}_{\Cal K}
\begin{bmatrix}
y_1\\
y_2\\
y_3 \\
\vdots \\
y_{N_t}
\end{bmatrix}
$$
$$
\qquad \qquad \qquad
\qquad \qquad \qquad
- \Delta t
\underbrace{
\begin{bmatrix}
D_c& 0 & \cdots& \\
& D_c   &  & \\
&  & D_c   &  & \\
&  &  & \ddots & \\
&  &  &  & D_c   \\
\end{bmatrix}
}_{\Cal C}
\begin{bmatrix}
u_1\\
u_2\\
u_3 \\
\vdots \\
u_{N_t}
\end{bmatrix}
\hspace{-1mm}= \hspace{-1mm}
\begin{bmatrix}
My_0 + \Delta t g_1\\
0 + \Delta t  g_2\\
0 + \Delta t  g_3\\
\vdots \\
0 + \Delta t  g_{N_t}
\end{bmatrix}.
\]
\no Then, the above system could be written in the following form:
\[
\Cal K \pmb y - \Delta t \Cal C \pmb u = \fb{\Cal M \pmb y_0 + \Delta t  \pmb g},
\]
\fb{\A{where $\Cal M$ is a block diagonal matrix} in $ \mathbb R^{\Cal N  \cdot N_t} \times \mathbb R^{\Cal N  \cdot  N_t} $ which \A{diagonal} entries are $[M, \cdots, M]$}.
In the same way we can analyze the adjoint equation: we have to solve the first equation of the optimality system \eqref{strong_form_optimality_system} at each time step as follows:
\[
Mp_{k}  = M p_{k+1}  + \Delta t ( - M y_{k} - D_a^T p_{k} + My_{d_{k}} )
\spazio \B{\text{for } k \in \{ N_t - 1, N_t - 2, \dots, 1 \}}.
\]

\no As we did for the state equation, one could use an all-at-once strategy and consider the following system:
\[
\underbrace{
\begin{bmatrix}
M + \Delta t D_a^T & -M  & & \\
& M + \Delta t D_a^T & -M  & \\
&  & \ddots & \ddots  & \\
&  &  & M + \Delta t D_a^T & -M  \\
&  &  &  & M + \Delta t D_a^T \\
\end{bmatrix}
}_{\Cal K^T}
\begin{bmatrix}
p_1\\
p_2\\
p_3 \\
\vdots \\
p_{N_t}
\end{bmatrix}
+
 \begin{bmatrix}
\Delta t M y_1\\\
\Delta t M y_2\\
\Delta t M y_3 \\
\vdots \\
\Delta t M  y_{N_t}
\end{bmatrix}
=
\begin{bmatrix}
\Delta t My_{d_1} \\
\Delta t My_{d_2} \\
\Delta t My_{d_3} \\
\vdots \\
\Delta t My_{d_{N_t}}
\end{bmatrix}.
\]

\no Then, the adjoint system to be solved is:
\[
\Cal K^T \pmb p + \Delta t \Cal M \pmb y = \Delta t \Cal M \pmb y_d.
\]

\no Now, if we consider the optimality equation given by the differentiation of \eqref{functional} with respect to the control variable, at every time step we have to solve the equation
$\alpha \Delta t M u_k - \Delta t D_c^T p_k = 0.$ In a vector notation we have $\alpha \Delta t \Cal M \pmb u - \Delta t \Cal C^T \pmb p = 0$.
In the end, the final system considered and  solved through an one shot approach is the following:
\begin{equation}
\label{one_shot_system}
\begin{bmatrix}
\Delta t  {\Cal M} & 0 & \Cal {K}^T  \\
0 & \alpha \Delta t \Cal M &  -\Delta t \Cal C^T \\
\Cal K & -\Delta t \Cal C & 0 \\
\end{bmatrix}
\begin{bmatrix}
\pmb y \\
\pmb u \\
\pmb p \\
\end{bmatrix}
=
\begin{bmatrix}
\Delta t {\Cal {M}} \pmb y_d \\
0 \\
\fb{ {\Cal {M}} \pmb y_0 + \Delta t \pmb g} \\
\end{bmatrix}.
\end{equation}
Now, let us denote with
\begin{align*}
A =
\begin{bmatrix}
\Delta t {\Cal M} & 0 \\
0 & \alpha \Delta t \Cal M \\
\end{bmatrix},
\quad  B =
\begin{bmatrix}
\Cal K & -\Delta t \Cal C \\
\end{bmatrix},
\quad F =
\begin{bmatrix}
\Delta t {\Cal {M}} \pmb y_d  \\
0 \\
\end{bmatrix}
\quad \text{and } \quad
G = \fb{{\Cal M}\pmb y_0 + \Delta t  \pmb g}.
\end{align*}
Then, the system \eqref{one_shot_system} can be written as follows:
\begin{equation}
\label{one_shot_system_saddle_point}
\begin{bmatrix}
A & B^T \\
B & 0 \\
\end{bmatrix}
\begin{bmatrix}
\pmb x \\
\pmb p
\end{bmatrix}
=
\begin{bmatrix}
F \\
G
\end{bmatrix}.
\end{equation}
The global dimension of the full order  model presented is $\Cal N_{\text{tot}} = 3 \times N_t \times \Cal N$. \B{We underline that the saddle point structure does not depend on the used time approximation scheme: indeed, this framework can be extended to more general time discretization. }We managed to solve the linear system \eqref{one_shot_system} through a direct approach. Nevertheless, iterative methods based on Krylov solvers and Schur preconditioning are a very common choice in the solution of saddle point structures \cite{benzi_golub_liesen_2005} and they have been applied in the optimal control framework in \cite{schoberl2007symmetric,Stoll}, for example.\\
Now that we have introduced the FE discretization as our full order approximation, we can exploit it in order to build a reduced system, as it will be specified in the following section.
\section{POD-Galerkin ROM Applied to Parabolic Time Dependent \ocp s}
\label{sec_ROM} 
This section aims at introducing ROM approximation for time dependent \ocp s into the framework of saddle point problems by applying the techniques already used for the steady case in  \cite{negri2015reduced,negri2013reduced}. After a general introduction of the main ideas behind ROM applicability, we will briefly introduce the affine assumption over the bilinear and linear forms involved in the problem formulation. Then, in \A{Section} \ref{PODsec} we will describe the POD-Galerkin algorithm, see for example \cite{ballarin2015supremizer,burkardt2006pod,Chapelle2013,hesthaven2015certified} as references, and we will extend it to parametrized time dependent \ocp s. In the end, we will show how to guarantee the well-posedness of reduced  \ocp s thanks to the aggregated space strategy \cite{dede2010reduced,negri2015reduced,negri2013reduced}.

\subsection{Reduced Problem Formulation}
In \A{Section} \ref{problem}, we showed that linear quadratic time dependent \ocp s could be seen as saddle point problems of the form \eqref{linear_optimality system}. We recall to the reader that our state-control variable is $x(\boldsymbol{\mu}) = (y(\boldsymbol{\mu}),u(\boldsymbol{\mu})) \in \Cal X$: in this case we \A{highlight} the parameter dependency \A{which is of crucial importance in the understanding of the reduced formulation concepts}.
We assume that our parametric solution defines a smooth \textit{solution manifold}
$$
\mathscr M = \{ (x(\boldsymbol{\mu}), p(\boldsymbol{\mu}))\;| \; \boldsymbol{\mu} \in \mathscr P\}.
$$
The FE approximation is reflected also in the solution manifold: indeed one can define a approximated solution manifold:
$$
\mathscr M \disc = \{ (x\disc(\boldsymbol{\mu}), p\disc(\boldsymbol{\mu}))\;| \; \boldsymbol{\mu} \in \mathscr P\}.
$$
\no Once again, we assume that also the approximated manifold has a smooth dependence with respect to $\boldsymbol{\mu}$. Reduced order methodology wants to recover the structure of $\mathscr M \disc$ through basis \A{functions} derived from properly chosen full order solutions $x \disc (\boldsymbol{\mu})$ and $ p \disc(\boldsymbol{\mu})$ called snapshots. In other words, we construct the reduced \A{bases exploiting FE solutions evaluated for some specific $\bmu \in \mathscr P$}. Let us assume  to \A{have already built} $\Cal X_N \subset \Cal X \disc \subset \Cal X$ and $\Cal Y_t{_N} \subset \Cal Y_{t} \disc \subset \Cal Y_t$ as reduced product state-control space and reduced adjoint space, respectively. We have all the ingredients to define the reduced problem as follows: given $\boldsymbol{\mu} \in \mathscr P$, find $(x_N(\boldsymbol{\mu}), p_N(\boldsymbol{\mu})) \in \Cal X_N \times \Cal Y_t{_N}$ such that
\begin{equation}
\label{RB_OCP}
\begin{cases}
\Cal A(x_N(\boldsymbol{\mu}),w_N) + \Cal B(w_N, p_N(\boldsymbol{\mu}); \boldsymbol{\mu}) = 
\displaystyle
\intTime{
\la F(\boldsymbol{\mu}), w_N \ra} & \forall w_N \in \Cal {X}_N, \\
\Cal B(x_N(\boldsymbol{\mu}), q_N; \boldsymbol{\mu}) =  
\displaystyle
\intTime{
\la G(\boldsymbol{\mu}), q_N \ra} & \forall q_N \in \Cal Y_t{_N}.
\end{cases}
\end{equation}
Also at the reduced level, in order to assert the well-posedness of the reduced saddle point problem \eqref{RB_OCP}, Brezzi's theorem has to be verified. In \A{Section} \ref{agg}, we will show how to recover the existence and the uniqueness of the reduced minimizing solution $(x_N(\boldsymbol{\mu}), p_N(\boldsymbol{\mu})) \in \Cal X_N \times \Cal Y_t{_N}$.

\subsection{\A{Affine} Assumption: \fb{Offline--Online} decomposition}
\label{aff}
Let us briefly underline the crucial hypothesis which guarantees efficient applicability of reduced order methods: \A{affine} assumption. A problem in \A{a} saddle point framework \eqref{linear_optimality system} is \A{affinly decomposed with respect to the parameter} if the involved bilinear forms and functionals can be recast as:
\begin{equation}
\begin{matrix}
& \qquad \Cal A(x, w; \boldsymbol{\mu}) =\displaystyle  \sum_{q=1}^{Q_\Cal{A}} \Theta_\Cal{A}^q(\boldsymbol{\mu})\Cal{A}^q(x,w), & 
& \qquad \Cal B(w,p; \boldsymbol{\mu}) =\displaystyle  \sum_{q=1}^{Q_\Cal{B}} \Theta_\Cal{B}^q(\boldsymbol{\mu})\Cal{B}^q(w,p), \\
& \qquad  \la G(\boldsymbol{\mu}), p \ra =\displaystyle  \sum_{q=1}^{Q_G} \Theta_G^q(\boldsymbol{\mu})\la G^q , p \ra, & 
& \qquad  \la F(\boldsymbol{\mu}), w \ra =\displaystyle  \sum_{q=1}^{Q_F} \Theta_F^q(\boldsymbol{\mu})\la F^q , w \ra,
\end{matrix}
\end{equation}
\no  for some finite $Q_\Cal{A}, Q_\Cal{B}, Q_G, Q_F$, where $\Theta_\Cal{A}^q,\Theta_\Cal{B}^q, \Theta_G^q, \Theta_F^q$ are $\boldsymbol{\mu}-$dependent smooth functions, whereas $\Cal A^q,\Cal B^q$, $G^q, F^q$ are $\boldsymbol{\mu} -$independent bilinear forms and functionals.
\\Thanks to this assumption, the solving process of our \ocp $\:$ can be divided in two different phases: an \textbf{offline} stage where the reduced spaces are derived and the $\boldsymbol{\mu}-$independent quantities are assembled. This stage could be very expensive but it is performed only once. Then, an \textbf{online} stage follows and all the $\boldsymbol{\mu}-$dependent quantities are assembled and the whole reduced system is solved. The online phase is performed for every new parameter evaluation in order to study different physical and/or geometrical configurations.

\no If the problem does not fulfill the \A{affine} assumption, \A{the} empirical interpolation method can be exploited in order to recover it, as presented in \cite{barrault2004empirical} or in \cite[Chapter 5]{hesthaven2015certified}.
\subsection{POD Algorithm for OCP($\boldsymbol{\mu}$)s}

\label{PODsec}
In this section we introduce the POD-Galerkin strategy which we exploited in order to build the reduced spaces needed for time dependent \ocp s, following \cite{ballarin2015supremizer,burkardt2006pod,Chapelle2013,hesthaven2015certified}.\\
The first step of the the POD-Galerkin approach is to choose a discrete subset of parameters $\mathscr P_h \subset \mathscr P$. Thanks to this new finite dimensional parametric set, we can define a specific solution manifold:
$$
\mathscr M \disc (\mathscr P_h) = \{ ( x \disc(\boldsymbol{\mu}), p\disc(\boldsymbol{\mu})) \; | \; \boldsymbol{\mu} \in \mathscr P_h\},
$$
which cardinality is $N_{\text{max}} = |\mathscr P_h|$ and which satisfies the inclusion
$
\mathscr M \disc (\mathscr P_h) \subset \mathscr M\disc
$
since $\mathscr P_h \subset \mathscr P$. When the finite parameter space $\mathscr P_h$ is large enough, the manifold $\mathscr M \disc (\mathscr P_h) $ can be a reliable representation of the discrete manifold $\mathscr M \disc$.
The POD-Galerkin approach \A{compresses the redundant information contained in the snapshots of $\mathscr M \disc (\mathscr P_h)$}.
\\ We exploited a \tit{partitioned approach}, i.e. the POD algorithm has been applied separately for state, control and adjoint variables.
\no In the end, the POD algorithm provides $N-$dimensional reduced spaces that minimize the quantities:
\begin{equation}
\label{crit} \hspace{-1.5cm}
\sqrt{\frac{1}{N_{max}}
\sum_{\boldsymbol{\mu} \in \mathscr P_h} \underset{z_N \in {{\Cal {Y}}_t}_{N}}{\text{min }} \norm{y\disc(\boldsymbol{\mu}) - z_N}_{\Cal Y_t}^2},
\hspace{.2cm}
\sqrt{\frac{1}{N_{max}}
\sum_{\boldsymbol{\mu} \in \mathscr P_h} \underset{v_N \in \Cal U_N}{\text{min}} \norm{u\disc(\boldsymbol{\mu}) - v_N}_{\Cal U}^2},
\hspace{.2cm}
\sqrt{\frac{1}{N_{max}}
\sum_{\boldsymbol{\mu} \in \mathscr P_h} \underset{q_N \in {\Cal {Y}_t}_{N}}{\text{min}} \norm{p\disc(\boldsymbol{\mu}) - q_N}^2_{\Cal Y_t}}.
\end{equation} 
\no We are going to introduce the POD-Galerkin procedure for the state solution $y(\bmu)$. The same strategy has been used for \fb{control} and adjoint variables \fb{as well}. 
\\ Let us consider ordered parameters $\boldsymbol{\mu}_1, \dots, \boldsymbol{\mu}_{N_{max}}\in \mathscr P_h$ and the resulting ordered FE solutions \\$y\disc(\boldsymbol{\mu}_1), \dots, y\disc(\boldsymbol{\mu}_{N_{max}})$. Furthermore, we define the correlation matrix $\mbf C^y \in \mbb R^{N_{max} \times N_{max}}$ of snapshots of the state variable, i.e.:
$$
\mbf C_{ml}^y = \frac{1}{N_{max}}(y\disc(\boldsymbol{\mu}_m),y\disc(\boldsymbol{\mu}_l))_{\Cal {Y}_t}, \hspace{1cm} 1 \leq m,l \leq N_{max}.
$$
\no We look for the $N$-largest eigenvalue-eigenvector pairs $(\lambda_n^y, v_n^y)$, which solve the following equations:
$$
\mbf C^y v_n^y = \lambda_n^y v_n^y, \hspace{1cm} 1 \leq n \leq N, 
$$ 
\no with $\norm {v_n^y} = 1$. 
Let us order the eigenvalues $\lambda_1^{\fb{y}} \geq \lambda_2^{\fb{y}} \geq \cdots
\geq \lambda_N^{\fb{y}}$ from the \fb{largest} to the smallest. This order reflects on the basis functions $\{\xi_1^{\fb{y}}, \dots, \xi_N^{\fb{y}}\}$ of the reduced space
$\Cal Y_t{_N}= \text{span }\{\xi_1^y, \dots, \xi_N^y\}$. The basis satisfies:
$$
\xi_n^y = \displaystyle \frac{1}{\sqrt{\fb{\lambda_m^y}}}\sum_{m = 1}^M (v_n^y)_m y\disc(\boldsymbol{\mu}_m), \hspace{1cm} 1 \leq n \leq N,
$$
\no where $(v_n^y)_m$ is \emph{m-th} component of the state eigenvector $v_n^y \in \mbb R^M$.

\begin{remark} 
We underline that, \fb{even if} we applied the \fb{POD algorithm} separately for the different variables, \fb{we have not separated time instances, i.e. each snapshot still contains the solution at \emph{all} temporal steps}. \fb{In this way, the reduced basis functions comply with the space-time formulation introduced in \eqref{FEOCP}, and the resulting POD-Galerkin ROM is a space-time reduced order model.}
\end{remark}

\subsection{Aggregated Spaces Approach}

\label{agg}
As we underlined in \A{Section} \ref{problem}, the adjoint variable $p(\bmu)$ is considered in the state space $\mathcal Y_t$, \fb{in order} to ensure the well-posedness of the whole \ocp . It is well known that a POD for state and adjoint variables will not lead necessarily to the same reduced space approximation. Indeed, let us assume to have applied the POD algorithm \A{with the same value of $N_{\text{max}}$ and retaining the first $N$ eigenvalues for all the involved variables}, as described in \A{Section} \ref{PODsec}: the procedure provides reduced spaces for state, control and adjoint variables as
\begin{align*}
& \Cal Y_{t_{N}} = \text{span}\{y\disc (\boldsymbol{\mu}^n) \; n = 1, \dots, N\}, \\
& \Cal U_{N} = \text{span}\{u\disc (\boldsymbol{\mu}^n) \; n = 1, \dots, N\}, \\
& \Cal Q_{t_{N}} = \text{span}\{p\disc (\boldsymbol{\mu}^n) \; n = 1, \dots, N\},
\end{align*}
respectively. As already \A{done} for the continuous and full order versions of the problem, we define the product space $\Cal X_N = \Cal Y_{t_{N}} \times \Cal U_{N}$. Once the reduced spaces are available, it remains to prove if the Brezzi's theorem is still valid, i.e. if the reduced saddle point problem \eqref{RB_OCP} admits an unique solution. The continuity of the bilinear forms $\Cal A \cd$ and $\Cal B (\cdot, \cdot; \pmb \mu)$ are directly inherited from the FE approximation, as well as the coercivity of $\Cal A \cd$ over  the kernel
$\Cal X_0^N = \{ w_N \in \mathcal X_N : \Cal B(w_N, q; \pmb \mu) = 0, \; \forall q \in \Cal Q_{t_{N}} \}$, even if the state and the adjoint reduced spaces do not coincide. Although, it is not guaranteed  the fulfillment of the \emph{reduced inf-sup condition}. Indeed, only the following inequality holds: 
\begin{equation}
\label{reduced_infsup}
\inf_{0 \neq q \in \Cal Y_{t_{N}}} \sup_{0 \neq x \in \Cal X_N } \frac{\Cal B(x, q; \bmu)}{\norm{x}_{\Cal X}\norm{q}_{\Cal Y_t}} \geq  \beta_N(\bmu) > 0,
\end{equation}
i.e. the \emph{reduced inf-sup} \emph{condition} \eqref{reduced_infsup} in verified only when $\Cal Y_{t_{N}} \equiv \Cal Q_{t_{N}}$, which is not the case for a standard POD approach. \\
In order to avoid this \fb{inconvenience}, we exploit the aggregated spaces technique as presented in \cite{dede2010reduced,negri2015reduced,negri2013reduced}.  The main feature of this approach is to define a common space for state and adjoint variables, given by
$$
Z_N = \text{span }\{y\disc (\boldsymbol{\mu}^n), p\disc(\boldsymbol{\mu}^n), \; n = 1, \dots, N\}. 
$$
The space $Z_N$ is then used \fb{to} describe both the reduced state variable $y_N(\bmu)$ and the reduced adjoint variable \fb{$p_N(\bmu)$}. The new product space is now $\Cal X_N = Z_N \times \Cal U_N$, where the control space is of the standard form
$$
\Cal U_N = \text{span }\{ u \disc (\boldsymbol{\mu} ^n), \; n = 1, \dots, N\}.
$$
This choice will lead to a global dimension $N_{\text{tot}} =5N$. Thanks to this strategy, the reduced optimality system is well-posed since all the hypotheses of Brezzi's theorem hold with the \emph{reduced inf-sup} of the following form
\begin{equation}
\label{reduced_infsup_Z}
\inf_{0 \neq q \in {Z_{N}}} \sup_{0 \neq x \in \Cal X_N } \frac{\Cal B(x, q; \bmu)}{\norm{x}_{\Cal X}\norm{q}_{\Cal Y_t}} \geq  \beta_N(\bmu) > 0.
\end{equation}
\no Now we have all the necessary notions needed in order to show some applications of ROM for time dependent \ocp s. In the next \A{Section} we will show how advantageous reduced modelling could be in this very costly context.

\section{Numerical Results: Time Dependent \ocp $\:$ for Graetz flows}
In this section we are going to present a numerical example in order to validate the performances of POD-Galerkin method for time dependent \ocp s: we will apply our methodology to a time dependent version of the test case proposed in \cite{negri2013reduced}.

\label{ADR_OCP}
\no \A{The proposed test case} deals with a time dependent \ocp $\:$ governed by a Graetz flow with a control over \A{the boundary $\Gamma_C = ([1, 1+\mu_3] \times \{0\}) \cup ([1, 1+\mu_3] \times \{1\})$}, which is represented in \A{Figure} \ref{domainADR}. In this case, $\mu_3$ is a geometrical parameter which stretches the length of \both{$\Omega_2(\mu_3) := [1, 1+\mu_3]\times [0.2, 0.8]$ }  and \both{$\Omega_3(\mu_3) :=[1, 1+\mu_3]\times [0, 0.2] \cup [1, 1+\mu_3]\times [0.8,1]$ } as \A{Figure} \ref{domainADR} shows.
\no We now introduce the other two parameters: $\mu_1$ which represents the diffusivity coefficient of the system and $\mu_2$ which is the desired profile solution we want to reach in \A{$\Omega_3(\mu_3)$}.
The parameter $\pmb \mu = [\mu_1, \mu_2, \mu_3]$ is considered in $\mathscr P = [1/20,1/6] \times [1,3] \times [1/2, 3]$. 


\no  Let us specify the function spaces $Y$ and $U$ needed. For this test case we define $y \in \Cal Y_t$ where $Y = H^1_{\Gamma_{D}}(\Omega(\mu_3))$ and $u \in \Cal U$ where $U=L^2(\Gamma_C(\mu_3))$. Furthermore, let $\Cal X$ be the product space $\Cal Y_t \times \Cal U$.
All the data of the \A{test case} are recap in \A{Table} \ref{table_data_ADR}.
The problem we consider reads: given $\pmb \mu \in \mathscr P$, find the state-control variable 
$(y, u) \in \Cal X$ which solves:
\begin{equation}
\label{output}
\min_{(y,u) \in \Cal X} J(y,u; \pmb \mu ) = \min_{(y,u) \in \Cal X} 
\half \int_0^T \int_{\Omega_{3}}(y - y_d(\pmb \mu))^2dxdt + 
\alf \int_0^T \int_{\Gamma_C}u^2dxdt
\end{equation}
constrained to the equation
\begin{equation}
\begin{cases}
\displaystyle \dt{y} + \mu_1 \Delta y + x_{{2}}(1 - x_{{2}})\frac{\partial y}{\partial x_{{1}}} = 0 & \text{in } \Omega(\mu_3) \times \B{(0,T)}, \\
y = 1 & \text{on } \Gamma_D(\mu_3) \times \B{(0,T)}, \\
\displaystyle \mu_1 \dn{y} = u & \text{on } \Gamma_C(\mu_3) \times \B{(0,T)}, \vspace{1mm}\\
\displaystyle \mu_1 \dn{y} = 0 & \text{on } \Gamma_N(\mu_3) \times \B{(0,T)}, \\
y = y_0 &  \text{in } \Omega(\mu_3) \times \{0\},
\end{cases}
\end{equation}
where $x_1$ and $x_2$ are the spatial components, $y_0$ is the null function in the domain which respects the boundary conditions and $y_d(\bmu) \equiv \mu_2$.
As already presented in \A{Section} \ref{problem}, we applied \A{a} Lagrangian approach and the optimize-then-discretize technique in order to recover the following optimality system: given $\pmb \mu \in \mathscr P$, find $((y, u), p) \in 
\Cal X \times \Cal Y_t$ such that
\begin{equation}
\begin{cases}
 y - \displaystyle \dt{p} + \mu_1 \Delta p - x_{{2}}(1 - x_{{2}})\frac{\partial p}{\partial x_{{1}}} = y_d(\bmu)& \text{in } \Omega(\mu_3) \times \B{(0,T)}, \\
p = 0 & \text{on } \Gamma_D(\mu_3) \times \B{(0,T)} \\
\displaystyle \mu_1 \dn{p} = 0 & \text{on } \Gamma_N(\mu_3) \times \B{(0,T)}, \\
p = 0 & \text{in } \Omega(\mu_3) \times \{T\}, \\
\alpha u = p & \text{in } \Gamma_C(\mu_3) \times \B{(0,T)}, \\
\displaystyle \dt{y} + \mu_1 \Delta y + x_{{2}}(1 - x_{{2}})\frac{\partial y}{\partial x_{{1}}} = 0 & \text{in } \Omega(\mu_3) \times \B{(0,T)}, \\
y = 1 & \text{on } \Gamma_D \times \B{(0,T)}, \\
\displaystyle \mu_1 \dn{ y} = u & \text{on } \Gamma_C(\mu_3) \times \B{(0,T)}, \vspace{1mm} \\
\displaystyle \mu_1 \dn{ y} = 0 & \text{on } \Gamma_N(\mu_3) \times \B{(0,T)}, \\
y = y_0 & \text{in } \Omega(\mu_3) \times \{0\}.
\end{cases}
\end{equation}

\no The problem has been solved exploiting the following strategy: first of all, we traced back the original problem into the reference domain presented in \A{Figure} \ref{domainADR}. \A{The reference domain corresponds to $\mu_3 = 1$}. The full order discretization is performed as described in \A{Section} \ref{Algebraic_parabolic}: we used $\Delta t =1/6$
over the interval $\B{(0,T)} = \B{(0,5)}$, with a resulting number of time steps $N_t = 30$. Moreover, for the space discretization, we used $\mathbb P^1$ elements for all the variables involved \B{with $\Cal N = 3487$}, working with function spaces of the form $\Cal X\disc := {\Cal {Y}_t} \disc \times \Cal U \disc$ and ${\Cal Y_t} \disc$, for every time step of state-control and adjoint variables, respectively. The total number of degree of freedom of the full order approximation is $\Cal N_{\text{tot}} = 3\times N_t \times {\Cal {N}} = 313'830$. We built the reduced spaces applying the POD-Galerkin approach as presented in \A{Section} \ref{PODsec}. For all the variables we choose $N_{\text{max}} = 70$ snapshots. The basis functions  were obtained retaining the first $N = 35$ eigenvectors of the snapshots correlation matrix: from now on we will define $N$ as the \emph{basis number}. In order to guarantee the well-posedness of the reduced saddle point problem arising from the constrained optimization, we exploited aggregated space technique described in \A{Section} \ref{agg}: it led to a total reduced dimension of $N_{\text{tot}}= 5N = 175$.  The basis considered are sufficient and they well represent the optimal solution as one can observe from the average relative error\footnote{The error for state, control and adjoint variables are presented in the following norms: $\norm{y\disc - y_N}^2_{H^1}$, \\$\norm{u\disc - u_N}^2_{L^2}$ and $\norm{p\disc - p_N}^2_{H^1}$, respectively.} plots in \A{Figure} \ref{error_ADR} and the solution plots in \A{Figure} \ref{simulation_ADR}.  The \A{FE} simulations and the reduced simulations are compared in \A{Figure} \ref{simulation_ADR}: they match for different time instances. In \A{Figure} \ref{error_ADR}, we show the average relative errors over a testing set of $50$ parameters, \B{uniformly distributed}: as expected, it decreases for a high basis number $N$, with a minimum value of $10^{-4}$ for all the variables. In \A{Figure} \ref{error_ADR}, we also present the average relative error between the FE and the ROM values of the functional $J(\cdot, \cdot; \pmb \mu)$, i.e. our \emph{output}.  We specify that the effort needed for solving an offline phase for time dependent \ocp s drastically increases for high values of $N_{\text{max}}$: still, our choice for the number of snapshots gave us a good overview of the whole parametrized system in a reasonable amount of time. Let us now focus on the computational time involved both in the full and in the reduced order simulations.
\no In \A{Figure} \ref{speedup_ADR}, the \emph{speedup} index for this \A{test case} is presented. The speedup represents how many reduced order simulations can be performed in the time of a single full order FE element simulation. It reaches a maximum value of $1,8\cdot 10^5$, while the lowest values associated to an increasing value of $N$ are not below $10^5$. The speedup index underlines how convenient is ROM system for repeated parametric instances of time dependent \ocp s, since the very expensive formulation of the whole system \eqref{one_shot_system} is projected in a low dimensional framework which recovers the evolution of the optimality system and permits to study several configurations in the online phase.  
\begin{table}[H]
\centering
\caption{Data for the \ocp  $\:$ governed by a Graetz flow.}
\label{table_data_ADR}
\begin{tabular}{ c | c }
\toprule
\textbf{Data} & \textbf{Values} \\
\midrule
$\mathscr P$ & $[1/20,1/6] \times [1,3] \times [1/2, 3]$\\
\midrule
 $(\mu_1, \mu_2, \mu_3,  \alpha)$ &  $(1/12, 2, 5/2, 10^{-2})$ \\
\midrule
\A{$N_{\text{max}}$} & 70 \\ 
\midrule
Basis Number $N$ & 35 \\
\midrule
Sampling Distribution & Uniform \\
\midrule
$\Cal N_{\text{tot}}$ & $313'830$ \\
\midrule
\A{$N_{t}$}& \A{$30$} \\
\midrule
ROM System Dimension & $175$ \\
\bottomrule
\end{tabular}
\end{table}
\begin{figure}[H]
\begin{center}
\includegraphics[scale = .33]{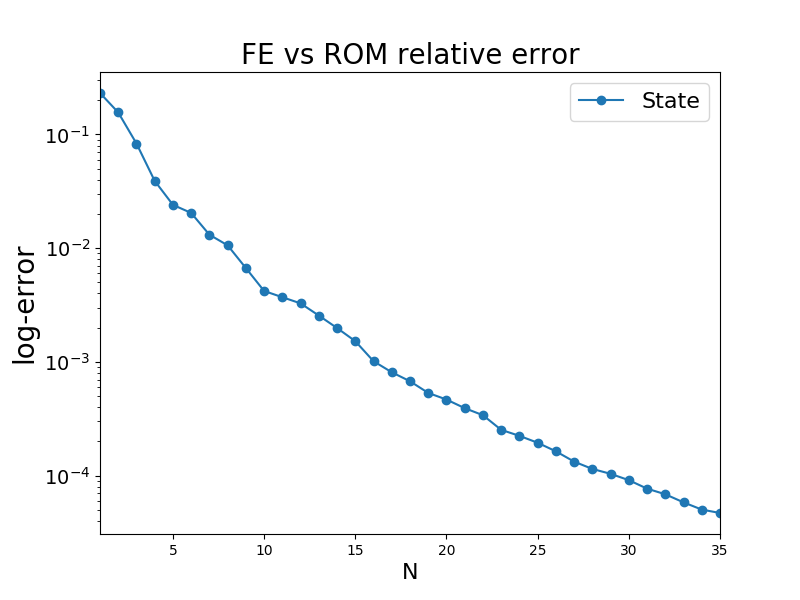}\includegraphics[scale = .33]{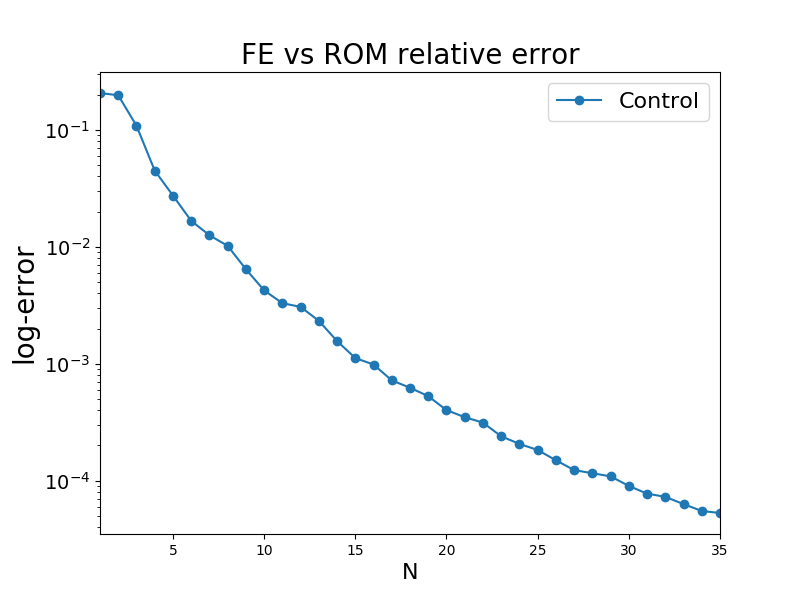} \vspace{-2mm}\\
\includegraphics[scale = .33]{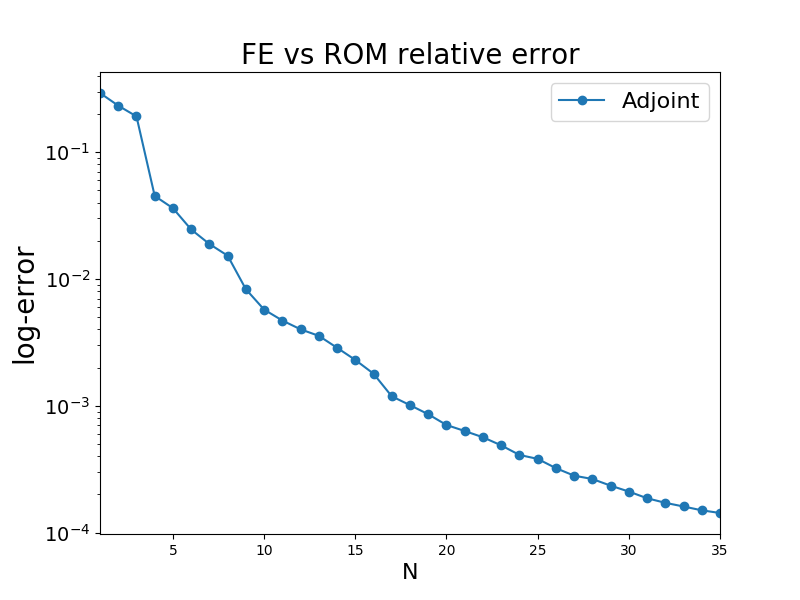}\includegraphics[scale = .33]{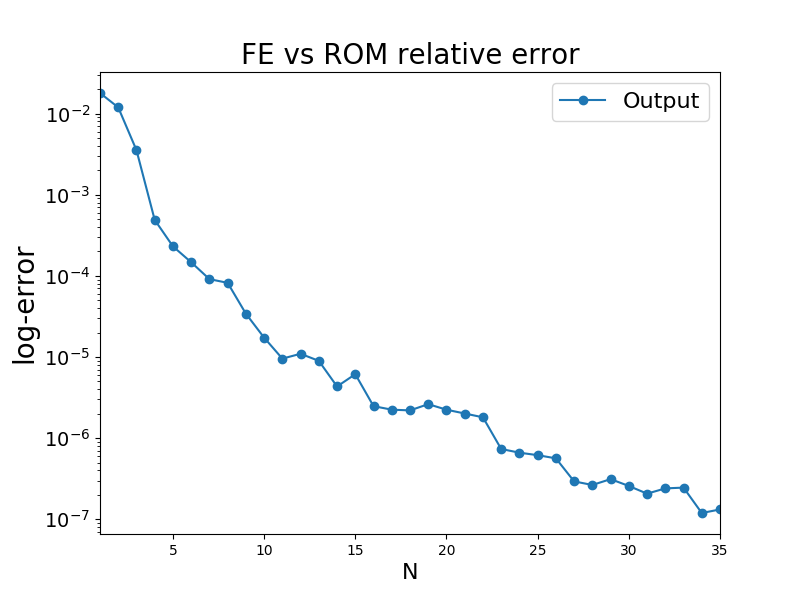}
\caption{FE vs ROM relative errors with respect to the basis number $N$ in logarithmic scale. Top left: state error; top right: control error; bottom left: adjoint error; bottom right: output error.}
\label{error_ADR}
\end{center}
\end{figure}
\vspace{-1.7cm}

\begin{figure}[H]
\hspace{-1cm}\includegraphics[scale = .2]{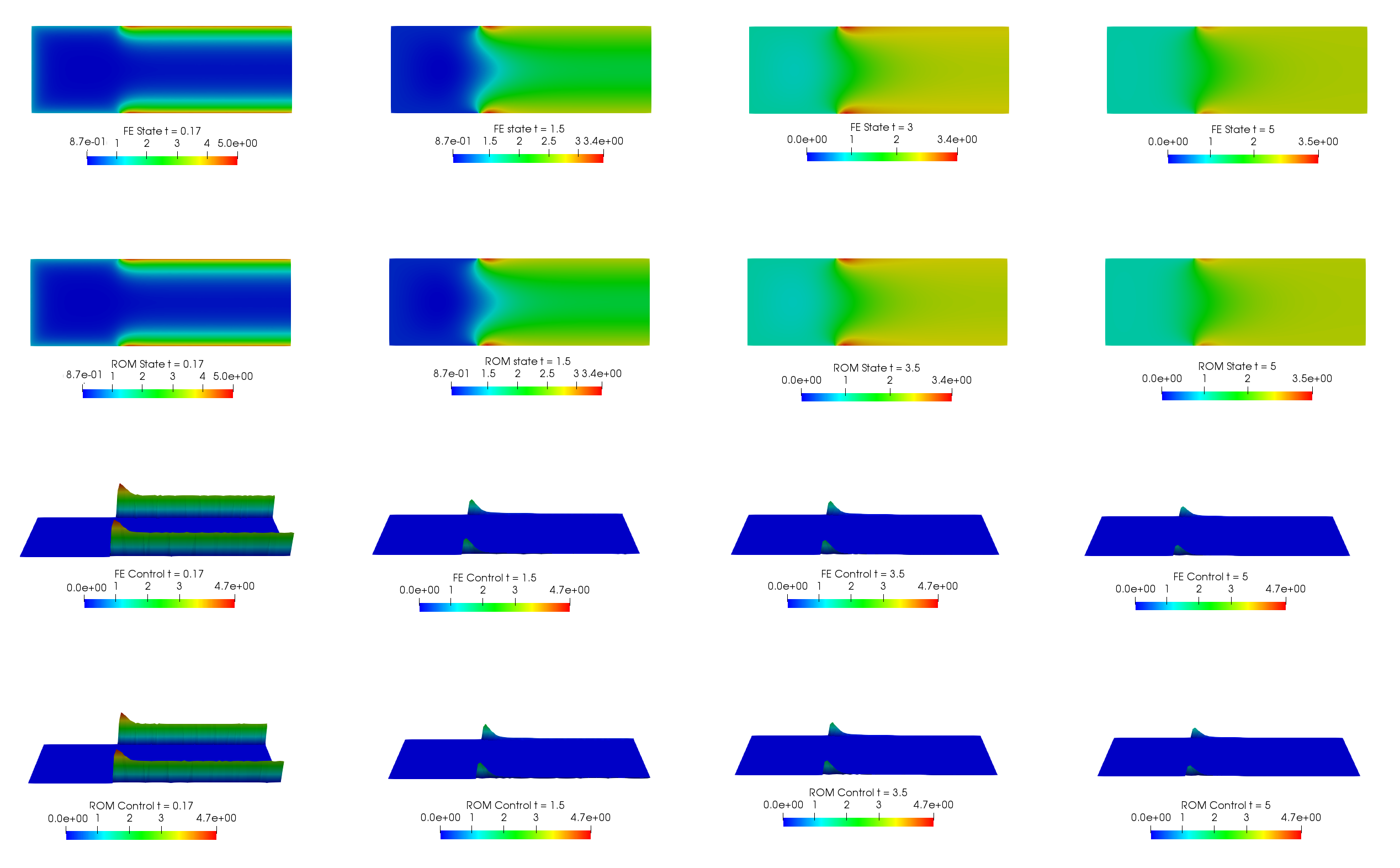}
\caption{FE vs ROM simulations of state and control variables for $\pmb \mu = [1/12, 2, 2.5]$ and $\alpha = 10^{-2}$. First row: FE state for $t=0.17,1.5,3.5, 5$; second row: ROM state for $t=0.17,1.5,3.5, 5$; third row: FE control for $t=0.17,1.5,3.5, 5$; fourth row: ROM control for $t=0.17,1.5,3.5, 5$. In order to better visualize the boundary control, we represent it in a third dimension.}
\label{simulation_ADR}
\end{figure}

\begin{figure}[H]
\begin{center}
\includegraphics[scale = .33]{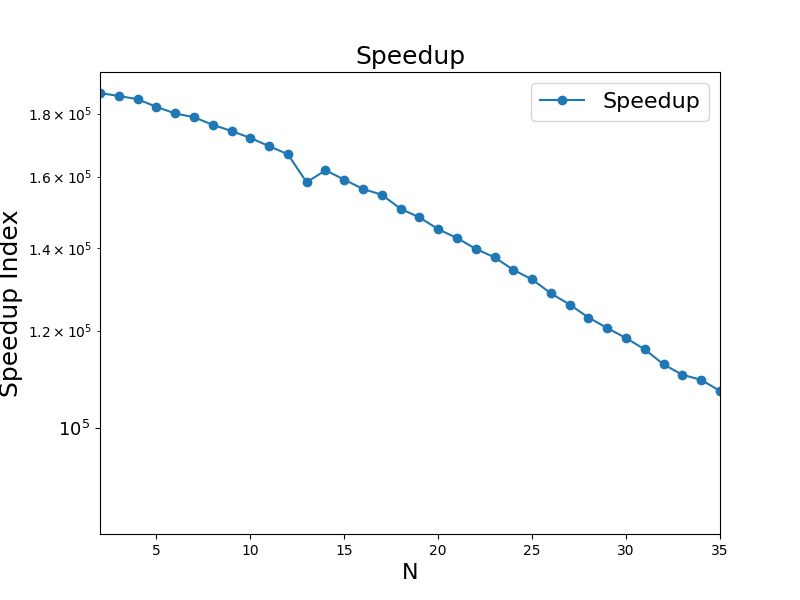}
\caption{Speedup analysis in logarithmic scale shown with respect to the basis number $N$.}
\label{speedup_ADR}
\end{center}
\end{figure}

\begin{figure}[H]
\begin{center}
\includegraphics[scale = .35]{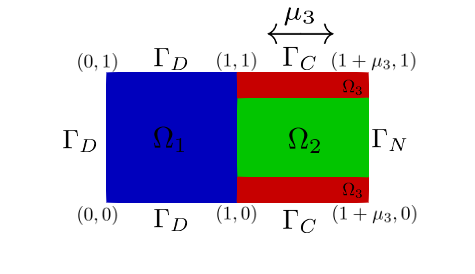}
\caption{\both{Considered reference } domain and subdomains \both{for $\mu_3 = 1$}.}
\label{domainADR}
\end{center}
\end{figure}

\section{Linear Quadratic Time Dependent \ocp s governed by Stokes Equations}
\label{Stokes_OCP}
In this section, we introduce a distributed \ocp $\:$ governed by time dependent Stokes equations. As we did for the parabolic case in section \ref{problem}, first of all, we provide the proof of well-posedness of this specific linear quadratic \ocp $\:$ in a saddle point framework. Then, we will adapt the space-time discretization and the aggregated POD-Galerkin technique of \A{Sections} \ref{Algebraic_parabolic} and \ref{agg}, respectively, to this more complex problem.

\subsection{Problem Formulation}
First of all, we would like to specify the nature of the parametrized problem: in our applications we consider $\pmb \mu = [\mu_1, \mu_2]$. The \A{parameters} $\mu_1$ and $\mu_2$ characterize the physics and the geometry of the governing equation, respectively.
 Let us consider the parametrized domain $\Omega (\mu_2) \subset \mathbb R^2$. We indicate with 
$\Gamma_D(\mu_2)$
the portion of the domain where Dirichlet boundary conditions are applied, while $\Gamma_{N}(\mu_2)$ is the part of $\partial \Omega$ characterized by Neumann conditions.  Let us specify the function spaces involved in this example:
let us introduce the space $V = H^1_{\Gamma_D(\mu_2)}(\Omega(\mu_2))$, the space $P = L^2(\Omega(\mu_2))$,
$\Cal V_t = \{ y \in \Cal V := L^2(0,T; V) \; \text{s.t.} \; \displaystyle \dt{y} \in L^2(0,T; V\dual) \}$ and
$\Cal P = L^2(0,T; P)$.
Our parameter dependent state variable $(y, p) \in \Cal Y_t := \Cal V_t \times \Cal P$ is controlled by the variable 
$u \in \Cal U := L^2(0,T; L^2(\Omega(\mu_2)))$. Following the structure described in \A{Section} \ref{problem}, we will exploit the velocity-pressure-control variable 
$x \in \Cal X:= \Cal Y_t \times \Cal U$, where $x := x(\bmu)$ represents the triplet 
$(y, p, u) := (y(\bmu), p(\bmu), u(\bmu))$. The norm of the space $\Cal X$ is $\norm{x}_{\Cal X}^2 = \norm{y}_{\Cal V_t}^2 + \norm{p}_{\Cal P}^2 + \norm{u}_{\Cal U}^2$.
In order to build the \ocp $\;$ we need to define the adjoint variable 
$(\lambda, \xi) \in \Cal Y_t$. From now on, the \fb{pair} $(\lambda, \xi)$ will be indicated with $\sigma$.
For a given $\bmu$ in a suitable parameter space  $\mathscr P \subset \mathbb R^2$, we want to find $(y,p,u) \in \Cal X $ which solves:
\begin{equation}
\label{func_Stokes}
\min_{(y,p,u) \in \Cal X} \half \intTimeSpace{(y - y_d)^2}  + \alf \intTimeSpace{u^2}\\
\end{equation}
constrained to the time dependent Stokes equations:
\begin{equation}
\label{P_Stokes}
\begin{cases}
\displaystyle
\dt{y} - \mu_1 \Delta y + \nabla p = u & \text{\spazio in } \Omega(\mu_2) \times \B{(0,T)} , \\
\dive(y) = 0 & \text{\spazio in } \Omega(\mu_2) \times \B{(0,T)} ,  \\
\displaystyle \frac{\partial y}{ \partial n} = 0 & \text{\spazio on  }  \Gamma_N(\mu_2) \times \B{(0,T)} , \\
y = g & \text{\spazio on  }  \Gamma_D(\mu_2) \times \B{(0,T)} , \\
y(0) = y_0 & \text{\spazio in } \Omega(\mu_2) \times \{\fb{0}\},
\end{cases}
\end{equation}
where $y_d \in \Cal V_t$ is a desired velocity profile \A{defined on the whole domain}. Also in this application, 
$\norm{y_0}_{L^2(\Omega(\mu_2))} = 0$. Again, for the sake of notation, we have omitted the parameter dependency of the variables. 
\\ In order to build the optimality system, first of all we consider the linear state equation in its weak formulation, after a lifting procedure. Let $w= (z, q) \in \Cal Y_t$ be a test function: again we consider $w \in \Cal Y_t$ rather than $w \in \Cal Y$ in order to guarantee a future proper definition of the adjoint variable.
As we already did in \A{Section} \ref{problem}, we can specify the following forms:
\begin{align*}
 \Cal A \goesto {\Cal X}{\Cal X}{\mathbb R}, & \quad
\Cal A(x,w) = m(y,z) +\alpha n(u,v), & 
 \forall x, w \in \Cal X, \\
\Cal B: \Cal X \times \Cal Y_t \rightarrow \mathbb R, & \quad 
\Cal B (x, w; \bmu) = \intTimeSpace{\dt{y}z} + \intTime{ a((y,p), (z,q); \bmu)} - 
\intTime{c(u,z)}, &
 \forall w\in \Cal Y_t,  \\
F(\bmu) \in \Cal X\dual, & \quad
\la F(\bmu), w \ra = \intSpace{y_d(\bmu)z}, & 
 \forall w\in \Cal Y_t,\\
G \in \Cal Y_t \dual, & \quad   \la G(\pmb \mu), w \ra = 0, & \forall w \in \Cal Y_t,\\
\end{align*}
where
\begin{align*}
m \goesto {\Cal Y_t}{\Cal Y_t}{\mathbb R}, & \spazio  m(y,z) = \displaystyle  \intTimeSpace {yz}, \\
n   \goesto {\Cal U}{\Cal U}{\mathbb R}, &  \spazio n(u,v) = \displaystyle  \intTimeSpace {uv}, \\
a  \goesto {\Cal Y_t}{\Cal Y_t}{\mathbb R}, &  \spazio a((y,p), (z,q); \bmu) = \mu_1 \intSpace{\nabla{y} \cdot \nabla{z}} - \intSpace{p \dive{(z)}} -\intSpace{\dive{(y)}q},\\
c  \goesto {\Cal U}{\Cal Y}{\mathbb R}, & \spazio  c(u,q) = \displaystyle \intSpace {uq}.\\
\end{align*}
In the end, let us define the bilinear forms involved in the Stokes equations as:
\begin{align}
\label{a_Stokes}
\mathsf a  \goesto {\Cal V_t}{\Cal V_t}{\mathbb R}, &  \spazio \mathsf  a(y, z; \bmu) =
\intTime{\la \dt{y}, z \ra} + \mu_1 \intTimeSpace{\nabla{y} \cdot \nabla{z}}, \\
\label{b_continuity}
\mathsf b  \goesto {\Cal V_t}{\Cal P}{\mathbb R}, & \spazio \mathsf b(z,p) = - \intTimeSpace{p \dive{(z)}}.
\end{align}
We are now able to define the functional of the form \eqref{cal_functional} and, then, the Lagrangian functional:
\begin{equation}
\label{functional_stokes}
\Lg ((y,p),u, (\lambda, \xi); \bmu) = \Cal J((y,u); \bmu) + \Cal B(x, \sigma ; \bmu),
\end{equation}
where $\Cal J(\cdot, \cdot; \bmu)$ is defined as in \eqref{cal_functional}. After the differentiation of the functional \eqref{functional_stokes} with respect to the variables $(y, p, u, \lambda, \xi)$, we obtain the optimality system of the form 
\eqref{optimality_system}:
\begin{equation}
\label{optimality_system_Stokes}
\begin{cases}
D_y\Lg((y,p), u, (\lambda, \xi); \pmb \mu)[z] = 0 & \forall z \in \Cal V_t,\\
D_p\Lg((y,p), u, (\lambda, \xi); \pmb \mu)[q] = 0 & \forall q \in \Cal P,\\
D_u\Lg((y,p), u, (\lambda, \xi); \pmb \mu)[v] = 0 & \forall v \in \Cal U,\\
D_\lambda\Lg((y,p), u, (\lambda, \xi); \pmb \mu) [\chi]= 0 & \forall \chi \in \Cal V_t.\\
D_\xi\Lg((y,p), u, (\lambda, \xi); \pmb \mu)[\tau] = 0 & \forall \tau \in \Cal P.\\
\end{cases}
\end{equation}
Since the time dependent Stokes equations \eqref{P_Stokes} are linear, the weak formulation of the optimality system \eqref{optimality_system_Stokes} can be recast in a saddle point problem of the form \eqref{linear_optimality system}, already introduced in \A{Section} \ref{problem}. \\
Given a parameter $\bmu \in \mathscr P$, we claim that solving the optimality system \eqref{optimality_system_Stokes} is equivalent to find the solution $(x,\sigma) \in \Cal X \times \Cal Y_t$ of
\begin{equation}
\label{Stokes_optimality system}
\begin{cases}
\Cal A (x,w)  + 
	\Cal B(w, \sigma; \bmu) = 
\displaystyle \intTime{\la F(\pmb \mu), w \ra} & \forall w \in \Cal X, \\
	 \Cal B(x, \zeta; \bmu) = 
0 & \forall \zeta \in \Cal Y_t.\\
\end{cases}
\end{equation}  
We assert that the problem \eqref{Stokes_optimality system} is well-posed. In order to prove the claim, we need  the following two lemmas. 
The first one will be exploited for the \emph{inf-sup condition} of the form $\mathcal B (\cdot, \cdot; \pmb \mu)$. Indeed, the well-posedness of time dependent Stokes equations in mixed formulation implies the second hypothesis of the Brezzi's theorem for the whole state equation $\mathcal B (\cdot, \cdot; \pmb \mu)$.
\begin{lemma}
\label{lemma_infsup}
Time dependent Stokes equations with $\norm{y_0}_{L^2(\Omega(\mu_2))}=0$ in their saddle point structure verifies Brezzi's theorem.
\end{lemma}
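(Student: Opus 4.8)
The plan is to check, one by one, the two structural hypotheses of Brezzi's theorem for the mixed (velocity--pressure) form of the time dependent Stokes system, where the ``$A$-block'' is the form $\mathsf a(\cdot,\cdot;\bmu)$ of \eqref{a_Stokes} acting on $\Cal V_t \times \Cal V_t$ and the ``$B$-block'' is the divergence coupling $\mathsf b(\cdot,\cdot)$ of \eqref{b_continuity} acting on $\Cal V_t \times \Cal P$. The kernel of $\mathsf b(\cdot,\cdot)$ is exactly the space of space-time solenoidal velocities $\Cal V_t^{0} = \{ z \in \Cal V_t : \dive(z) = 0 \text{ a.e. in } \Omega(\mu_2)\times(0,T)\}$, on which the pressure coupling drops out. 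Thus I must establish: (i) continuity of $\mathsf a$ and $\mathsf b$; (ii) weak coercivity of $\mathsf a$ on $\Cal V_t^{0}$; and (iii) the \emph{inf-sup condition} for $\mathsf b$.

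Continuity is routine and follows the pattern of Theorem \ref{Brezzi_continuous}: the diffusive part of $\mathsf a$ is bounded by $\mu_1\norm{y}_{\Cal V}\norm{z}_{\Cal V} \leq \mu_1 \norm{y}_{\Cal V_t}\norm{z}_{\Cal V_t}$ via Cauchy--Schwarz and Hölder, while the time-derivative part obeys $|\intTime{\la \dt{y}, z\ra}| \leq \parnorm{\dt{y}}_{\Cal V\dual}\norm{z}_{\Cal V} \leq \norm{y}_{\Cal V_t}\norm{z}_{\Cal V_t}$; the same argument bounds $\mathsf b$. For the coercivity of $\mathsf a$ on $\Cal V_t^{0}$, I would test with $z=y$ and integrate the time-derivative term by parts in time, obtaining $\intTime{\la \dt{y}, y\ra} = \half\norm{y(T)}_{H}^2 - \half\norm{y(0)}_{H}^2 = \half\norm{y(T)}_H^2 \geq 0$, where the vanishing of the initial contribution is precisely where the hypothesis $\norm{y_0}_{L^2(\Omega(\mu_2))} = 0$ enters. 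The diffusive term then yields $\mu_1 \intTimeSpace{|\nabla y|^2}$, which controls $\norm{y}_{\Cal V}^2$ through the Poincaré inequality on $V = H^1_{\Gamma_D(\mu_2)}$. To upgrade this $\Cal V$-control into control of the full $\Cal V_t$ norm, i.e. to recover $\parnorm{\dt{y}}_{\Cal V\dual}$ as well, I would reuse the parabolic a priori estimate \eqref{standard_inequality} exactly as in Lemma \ref{lemma_existence}, so that weak coercivity on the kernel holds with a constant depending on $\mu_1$.

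The \emph{inf-sup condition} for $\mathsf b$ is the classical Ladyzhenskaya--Babu\v{s}ka--Brezzi condition for Stokes. For each fixed time the spatial inf-sup $\inf_{p\in P}\sup_{z\in V} \frac{-\intSpace{p\dive(z)}}{\norm{z}_V\norm{p}_P}\geq \beta_0>0$ holds on $V\times P$; choosing at each instant the associated supremizer, equivalently applying a bounded right inverse of the divergence operator to $p(\cdot,t)$, and then integrating in time produces the space-time inf-sup for $\mathsf b$. Since the well-posedness of the time dependent Stokes problem in mixed form is established in the space-time setting of \cite{schwab2009space}, already invoked in Lemma \ref{lemma_existence}, all of these conditions can be collected into the statement that Brezzi's theorem holds. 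I expect the main obstacle to be reconciling the two space-time norms: the trial velocity lives in $\Cal V_t$, whereas the divergence coupling and its supremizer are naturally controlled only in the weaker $\Cal V$-norm, so the time-derivative component of the supremizer must be accounted for. I would handle this, as above, by appealing to the parabolic regularity estimate \eqref{standard_inequality} and to the cited space-time well-posedness result, which together guarantee a supremizer in $\Cal V_t$ with norm controlled by $\norm{p}_{\Cal P}$.
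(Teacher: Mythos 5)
Your proof skeleton (continuity, coercivity of $\mathsf a(\cdot,\cdot;\bmu)$ on the divergence-free kernel, inf-sup for $\mathsf b(\cdot,\cdot)$) is the same as the paper's, and your continuity argument and the first half of your coercivity computation --- integration by parts in time so that $\intTime{\la \dt{y}, y \ra} = \half \norm{y(T)}_{L^2(\Omega(\mu_2))}^2 \geq 0$ thanks to $y_0 = 0$, leaving the diffusive term $\mu_1 \norm{y}_{\Cal V}^2$ --- coincide with the paper's proof. The two steps where you diverge from the paper are both genuine gaps.

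First, the passage from $\Cal V$-control to $\Cal V_t$-control of a kernel element cannot be made with the a priori estimate \eqref{standard_inequality}. That estimate bounds the \emph{solution} of a parabolic problem in terms of its forcing term and initial datum; in Lemma \ref{lemma_existence} it is legitimately applied because $\bar y$ solves \eqref{useful_eq} with right-hand side $\Cal D_a v$. An arbitrary element of the kernel of $\mathsf b$, however, is nothing more than a divergence-free field in $\Cal V_t$: it solves no equation, there is no forcing term to insert into \eqref{standard_inequality}, and the step has no content. Indeed, no inequality of the form $\parnorm{\dt{y}}_{\Cal V \dual} \leq C \norm{y}_{\Cal V}$ can hold uniformly over divergence-free fields: take $y(t,x) = \sin(\omega t)\, w(x)$ with $\dive(w) = 0$ fixed and let $\omega \to \infty$. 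The paper does something different at this point: it invokes Lemma \ref{lemma_1_6}, the purely norm-theoretic bound $\norm{y}_{\Cal V}^2 \geq \frac{1}{6} \norm{y}_{\Cal V_t}^2$, which is stated for \emph{every} function of $\Cal V_t$ with no PDE structure attached, and thereby concludes coercivity with constant $\mu_1/6$ on the whole space $\Cal V_t$, not merely on the kernel. That lemma, not \eqref{standard_inequality}, is the ingredient your argument is missing (incidentally, the same oscillatory field also puts pressure on the constant claimed in Lemma \ref{lemma_1_6}, so this step is delicate in the paper too; but that is the route its proof rests on).

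Second, the inf-sup for $\mathsf b$ cannot be obtained by applying the spatial inf-sup (LBB) condition at each time instant and then integrating. If for a.e.\ $t$ you take $z(t)$ to be a bounded right inverse of the divergence applied to $p(t)$, then $z$ is controlled only in $\Cal V$; worse, $z$ need not belong to the trial space $\Cal V_t$ at all, since its time derivative would involve $\dt{p}$, and $p \in \Cal P = L^2(0,T;P)$ has no time derivative whatsoever. Hence the quotient $\mathsf b(z,p)/(\norm{z}_{\Cal V_t} \norm{p}_{\Cal P})$ is not even defined for your candidate supremizer. You correctly sense this obstacle, but the patch you propose --- \eqref{standard_inequality} together with the space-time theory of \cite{schwab2009space} --- does not close it: the former again concerns parabolic solutions, and the latter treats scalar parabolic problems, not the mixed velocity--pressure pairing. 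The paper resolves exactly this point by citation: the space-time inf-sup for $\mathsf b$ on $\Cal V_t \times \Cal P$ is taken from \cite[Theorem 4.7 and Proposition 2.2]{guberovic_schwab_stevenson_2014}. Either that citation or an explicit construction of a supremizer lying in $\Cal V_t$ with $\Cal V_t$-norm controlled by $\norm{p}_{\Cal P}$ is required; your proposal contains neither.
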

\begin{proof}
The continuity of $\mathsf a(\cdot, \cdot; \pmb \mu) $ and of  $\mathsf b \cd $ is obvious. Moreover, the inf-sup condition for the bilinear form $\mathsf b \cd$ follows from  \cite[Theorem 4.7 and Proposition 2.2]{guberovic_schwab_stevenson_2014}. \\
It remains to prove the weakly coercivity of $\mathsf a (\cdot, \cdot; \pmb \mu)$ over the kernel of $\mathsf b \cd$. In our case, the bilinear form $\mathsf a(\cdot, \cdot; \pmb \mu)$ is actually coercive over the whole space $\Cal V_t$ since
\begin{align*}
\mathsf a(y,y; \bmu) & =  \intTime{\la \dt{y}, y \ra} + \mu_1 \intTimeSpace{\nabla{y} \cdot \nabla{y}} = \half \norm{y(T)}_{L^2(\Omega(\mu_2))}^2 + \mu_1 \norm{y}_{\Cal V}^2 \\
& \geq \mu_1 \norm{y}_{\Cal V}^2 \frac{\norm{y}_{\Cal V_t}^2}{\norm{y}_{\Cal V_t}^2}
\underbrace{\geq}_{\text{for Lemma } \ref{lemma_1_6}}\frac{\mu_1}{6}\norm{y}_{\Cal V_t}^2.
\end{align*}
\end{proof}
\cvd \\
A second lemma is needed to demonstrate the coercivity of the bilinear form $\mathcal A \cd$. Let us indicate the kernel of $\Cal B$ as $\Cal X_0$, as we did in the parabolic case: we are going to prove a norm equivalence which will be used in the proof of the well-posedness of \eqref{Stokes_optimality system}.
\begin{lemma}
\label{lemma_equiv}
On the space $\Cal X_0$, the norm $\norm{\cdot}_{\Cal X}^2$ is equivalent to $\norm{\cdot}_{\Cal V_t}^2 + \norm{\cdot}_{\Cal U}^2$. 
\end{lemma}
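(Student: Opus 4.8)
The plan is to establish the two-sided bound $c\big(\norm{y}_{\Cal V_t}^2 + \norm{u}_{\Cal U}^2\big) \leq \norm{x}_{\Cal X}^2 \leq C\big(\norm{y}_{\Cal V_t}^2 + \norm{u}_{\Cal U}^2\big)$ for every $x = (y,p,u) \in \Cal X_0$. One direction is free: since $\norm{x}_{\Cal X}^2 = \norm{y}_{\Cal V_t}^2 + \norm{p}_{\Cal P}^2 + \norm{u}_{\Cal U}^2 \geq \norm{y}_{\Cal V_t}^2 + \norm{u}_{\Cal U}^2$, the lower bound holds with $c=1$. All the content therefore lies in the reverse inequality, which amounts to controlling the pressure by the velocity and control on the kernel, i.e. proving an estimate of the form $\norm{p}_{\Cal P} \leq C_1\norm{y}_{\Cal V_t} + C_2\norm{u}_{\Cal U}$.

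First I would make the kernel condition explicit. By definition $x \in \Cal X_0$ means $\Cal B(x,(z,q);\bmu) = 0$ for all $(z,q) \in \Cal Y_t = \Cal V_t \times \Cal P$. Testing with pure-pressure functions $(0,q)$ collapses $\Cal B$ to $-\intTimeSpace{\dive(y)q}$, so $\dive(y)=0$ (which I will not even need quantitatively). Testing with pure-velocity functions $(z,0)$ yields the momentum identity
\begin{equation*}
\intTime{\la \dt{y}, z\ra} + \mu_1\intTimeSpace{\nabla{y}\cdot\nabla{z}} - \intTimeSpace{p\,\dive(z)} - \intTimeSpace{uz} = 0 \qquad \forall z\in\Cal V_t,
\end{equation*}
which, recalling $\mathsf b(z,p) = -\intTimeSpace{p\,\dive(z)}$, rearranges to $\mathsf b(z,p) = -\intTime{\la\dt{y},z\ra} - \mu_1\intTimeSpace{\nabla y\cdot\nabla z} + \intTimeSpace{uz}$.

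Next I would estimate the right-hand side: using $|\intTime{\la\dt y,z\ra}|\leq \parnorm{\dt y}_{\Cal V\dual}\norm{z}_{\Cal V}\leq\norm{y}_{\Cal V_t}\norm{z}_{\Cal V_t}$, Cauchy--Schwarz for the gradient term, and the continuous embedding $V\hookrightarrow L^2(\Omega(\mu_2))$ (with constant $C_P$) for the control term, I obtain $|\mathsf b(z,p)| \leq \big((1+\mu_1)\norm{y}_{\Cal V_t} + C_P\norm{u}_{\Cal U}\big)\norm{z}_{\Cal V_t}$. Dividing by $\norm{z}_{\Cal V_t}$ and invoking the inf-sup condition for $\mathsf b \cd$ — exactly the ingredient established inside the proof of Lemma \ref{lemma_infsup} via \cite{guberovic_schwab_stevenson_2014}, namely $\beta\norm{p}_{\Cal P}\leq \sup_{z\neq0}\mathsf b(z,p)/\norm{z}_{\Cal V_t}$ — gives the pressure bound $\norm{p}_{\Cal P}\leq\beta^{-1}\big((1+\mu_1)\norm{y}_{\Cal V_t}+C_P\norm{u}_{\Cal U}\big)$. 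Squaring, using $(a+b)^2\leq 2a^2+2b^2$, and inserting into $\norm{x}_{\Cal X}^2$ produces the upper bound with an explicit constant depending on $\mu_1$, $\beta$ and $C_P$, which closes the equivalence.

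The main obstacle is not any individual estimate — each is routine — but rather correctly isolating $\mathsf b(z,p)$ on the kernel: the pressure is controllable only because the momentum equation, which holds precisely on $\Cal X_0$, trades the divergence pairing against terms measurable by $\norm{y}_{\Cal V_t}$ and $\norm{u}_{\Cal U}$. The whole argument hinges on the inf-sup constant $\beta$ for $\mathsf b \cd$, and one must be careful that this is the space-time inf-sup on $\Cal V_t\times\Cal P$ rather than the pointwise-in-time Stokes inf-sup, so that the duality pairing of $\dt{y}$ is absorbed cleanly against the full $\Cal V_t$ norm of the test function and no uncontrolled time-derivative remainder is left behind.
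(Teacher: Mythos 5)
Your proof follows essentially the same route as the paper's: both treat the lower bound as immediate, extract the momentum identity by testing the kernel condition $\Cal B(x,\cdot;\bmu)=0$ with velocity-only test functions, bound $\mathsf b(\cdot,p)$ by $\norm{y}_{\Cal V_t}$ and $\norm{u}_{\Cal U}$, and close via the space-time inf-sup condition for $\mathsf b \cd$ from Lemma \ref{lemma_infsup} followed by squaring. If anything, your constant bookkeeping is more careful than the paper's (you correctly track the continuity constant $(1+\mu_1)$ and the embedding constant $C_P$ for the control term, where the paper writes $\min\{1,\mu_1\}$, evidently a typo for a max-type constant), and your closing remark about needing the space-time inf-sup on $\Cal V_t\times\Cal P$ rather than the pointwise-in-time one is exactly the right point of caution.
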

\begin{proof}
Let us consider $x = (y, p, u)$ in the kernel of $\Cal B$. First of all, it is obvious that 
$\norm{\cdot}_{\Cal{X}}^2 \geq \norm{\cdot}_{\Cal V_t}^2 + \norm{\cdot}_{\Cal U}^2$. Then, it remains to prove that there exists a positive constant $C_e(\bmu)$ such that 
\begin{equation}
\label{claim}
\norm{\cdot}_{\Cal{X}}^2 \leq C_e(\bmu)( \norm{\cdot}_{\Cal V_t}^2 + \norm{\cdot}_{\Cal U}^2).
\end{equation}
 If $x \in \Cal X_0$, it holds that
$$
\mathsf b(w,p) = c(u,w) - \mu_1\intTimeSpace{\nabla{y} \cdot \nabla{w}} - \intTime{\la \dt{y}, w \ra}
\spazio \forall w \in \Cal V_t.
$$
Then, we can derive the following inequalities for all $w \in \Cal V_t$:
\begin{align*}
\mathsf b(w,p) & \leq \norm{u}_{\Cal U} \norm{w}_{\Cal V} 
+ \mu_1\norm{y}_{\Cal V}\norm{w}_{\Cal V} + \parnorm{\dt{y}}_{\Cal V \dual} \norm{w}_{\Cal V} \\
& \leq \min \{ 1, \mu_1 \} (\norm{y}_{\Cal V_t} + \norm{u}_{\Cal U})\norm{w}_{\Cal V_t}.
\end{align*}
Now,  since the above inequality does not depend from $w$ and $p$, from lemma \ref{lemma_infsup} we know that
$$
\bar{\beta} \norm{p}_{\Cal P}\norm{w}_{\Cal V_t}\leq \inf_{0\neq p \in \Cal P}
\sup_{0 \neq w \in \Cal V_t} \mathsf b(w,p) \leq \min \{ 1, \mu_1 \} (\norm{y}_{\Cal V_t} + \norm{u}_{\Cal U})\norm{w}_{\Cal V_t}.
$$
Moreover, applying Young's inequality, we have that
\begin{align*}
\norm{p}_{\Cal P}^2 \leq \frac{\min \{1, \mu_1\}^2}{\bar \beta^2}(\norm{y}_{\Cal V_t} + \norm{u}_{\Cal U})^2
\leq 2\frac{\min \{1, \mu_1\}^2}{\bar \beta^2}(\norm{y}_{\Cal V_t}^2 + \norm{u}_{\Cal U}^2).\\
\end{align*}
Setting 
$\displaystyle {\bar{C}}_e(\bmu) = \Big ( 2\frac{\min \{1, \mu_1\}^2}{\bar \beta^2}\Big )$ the inequality \eqref{claim} is verified with $C_e(\bmu) = \min\{1, {\bar{C}}_e(\bmu) \}$.
\end{proof}
\cvd \\
\no Thanks to these two lemmas, we are now able to demonstrate the following theorem, which guarantees existence and uniqueness of the optimal solution for \ocp s governed by time dependent Stokes equations.
\begin{theorem}
The problem \eqref{Stokes_optimality system} is equivalent to the minimization \A{problem} \eqref{func_Stokes} under the \fb{constraint} \eqref{P_Stokes}. Moreover, the saddle point problem \eqref{Stokes_optimality system} admits a unique solution.
\end{theorem}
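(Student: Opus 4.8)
The plan is to deduce the statement from the general equivalence-and-well-posedness criterion of Proposition \ref{prop}, applied to the specific forms $\Cal A \cd$ and $\Cal B(\cdot, \cdot; \bmu)$ of the Stokes optimality system, exactly along the lines of the parabolic Theorem \ref{Brezzi_continuous}. Since the state equation \eqref{P_Stokes} is linear, the optimality system \eqref{optimality_system_Stokes} obtained by differentiating the Lagrangian \eqref{functional_stokes} is already equivalent to the saddle point problem \eqref{Stokes_optimality system}; it therefore remains to check the two Brezzi hypotheses, namely continuity and coercivity of $\Cal A \cd$ on the kernel $\Cal X_0$ of $\Cal B(\cdot, \cdot; \bmu)$, together with continuity and the inf-sup condition \eqref{infsup_2} for $\Cal B(\cdot, \cdot; \bmu)$. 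Once these hold, Proposition \ref{prop} delivers simultaneously the equivalence with the minimization of \eqref{func_Stokes} under the constraint \eqref{P_Stokes} and the existence and uniqueness of the solution.

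First I would dispatch the routine requirements. Symmetry and non-negativity of $\Cal A \cd$ are immediate from $\Cal A(x,w) = m(y,z) + \alpha n(u,v)$, and its continuity follows, as in Theorem \ref{Brezzi_continuous}, from the continuity of $m \cd$ and $n \cd$ via Cauchy--Schwarz and Hölder in time. The continuity of $\Cal B(\cdot, \cdot; \bmu)$ is obtained by bounding separately the time-derivative pairing, the form $a(\cdot, \cdot; \bmu)$ and the control form $c \cd$, using the inequalities I and II of Section \ref{problem}.

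For the inf-sup of $\Cal B(\cdot, \cdot; \bmu)$ I would mimic the parabolic argument: for a fixed adjoint test element I construct a convenient competitor $x \in \Cal X$, taking its control component to vanish and choosing its velocity--pressure component to solve the associated auxiliary Stokes problem. In this way the inf-sup of the full constraint $\Cal B(\cdot, \cdot; \bmu)$ is inherited from the Brezzi well-posedness of the time dependent Stokes state operator established in Lemma \ref{lemma_infsup}, with Lemma \ref{lemma_1_6} again supplying the equivalence between the $\Cal V$- and $\Cal V_t$-norms needed to bound the test function.

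The hard part, and the main obstacle, is the coercivity of $\Cal A \cd$ on $\Cal X_0$. Unlike the parabolic case, the form $\Cal A \cd$ is entirely blind to the pressure, since $\Cal A(x,x) = m(y,y) + \alpha n(u,u)$ contains no contribution from $p$; hence $\Cal A \cd$ cannot by itself be coercive in the full norm $\norm{\cdot}_{\Cal X}^2 = \norm{y}_{\Cal V_t}^2 + \norm{p}_{\Cal P}^2 + \norm{u}_{\Cal U}^2$. The remedy is Lemma \ref{lemma_equiv}: on $\Cal X_0$ the pressure norm is controlled by the velocity and control norms, so it suffices to prove coercivity in $\norm{y}_{\Cal V_t}^2 + \norm{u}_{\Cal U}^2$. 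For $x \in \Cal X_0$ the velocity solves the Stokes state equation with vanishing initial datum and forcing $u$, which yields a stability estimate $\norm{y}_{\Cal V_t} \leq k(\bmu)\norm{u}_{\Cal U}$; splitting $\alpha n(u,u)$ into two halves and using this bound on one of them, exactly the device of Theorem \ref{Brezzi_continuous}, gives coercivity in the velocity--control norm, and Lemma \ref{lemma_equiv} then upgrades it to coercivity in the full $\Cal X$-norm on $\Cal X_0$. This completes the verification of the hypotheses of Proposition \ref{prop} and hence the proof.
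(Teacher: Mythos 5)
Your overall architecture coincides with the paper's: invoke Proposition \ref{prop}; obtain symmetry, positivity and continuity of $\Cal A \cd$ and continuity of $\Cal B(\cdot,\cdot;\bmu)$ as in Theorem \ref{Brezzi_continuous}; and prove coercivity of $\Cal A \cd$ on $\Cal X_0$ by combining the norm equivalence of Lemma \ref{lemma_equiv} (your observation that $\Cal A \cd$ is blind to the pressure is exactly why the paper proves that lemma) with the stability estimate $\norm{y}_{\Cal V_t}\leq k(\bmu)\norm{u}_{\Cal U}$ and the splitting of $\alpha n(u,u)$ into two halves. All of that is precisely the paper's proof.

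The gap is in the inf-sup step, which you propose to handle by ``mimicking the parabolic argument'': zero control, velocity--pressure component solving an (unspecified) auxiliary Stokes problem, with Lemma \ref{lemma_1_6} closing the estimate. In the parabolic case this device works because the auxiliary solution $\bar y$ of Lemma \ref{lemma_existence} is manufactured with datum $a(q,\cdot;\bmu)$, so that $\Cal B((\bar y,0),q;\bmu)=\intTime{a(q,q;\bmu)}\geq M_a(\bmu)\norm{q}_{\Cal Y}^2$: the \emph{coercivity} of $a(\cdot,\cdot;\bmu)$ is the engine of the whole estimate, and Lemma \ref{lemma_1_6} merely converts $\norm{q}_{\Cal Y}$ into $\norm{q}_{\Cal Y_t}$. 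For Stokes that engine does not exist: the form playing the role of $a$, namely $\mathsf A((y,p),(w,q);\bmu)=\mathsf a(y,w;\bmu)+\mathsf b(w,p)+\mathsf b(y,q)$, is itself an indefinite saddle point form; plugging the adjoint test pair $\sigma=(w,q)$ into both slots gives $\mathsf a(w,w;\bmu)+2\,\mathsf b(w,q)$, which has no sign. Hence no choice of ``auxiliary datum equal to the test function'' can end with a coercivity bound, and Lemma \ref{lemma_1_6} has nothing to act on at this point. This is exactly why the paper abandons the constructive route here: it uses Lemma \ref{lemma_infsup} to assert that $\mathsf A$ is a well-posed mixed operator, hence by Babu\v{s}ka theory has a positive inf-sup constant $\beta_B(\bmu)$ on $\Cal Y_t\times\Cal Y_t$, and then simply notes that for competitors $x=(y,p,0)$ one has $\Cal B(x,\sigma;\bmu)=\mathsf A((y,p),\sigma;\bmu)$, so the sup over $\Cal X$ dominates $\beta_B(\bmu)$. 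Your construction can be repaired in this spirit: solve the auxiliary problem $\mathsf A((\bar y,\bar p),(z,\tau);\bmu)=(\sigma,(z,\tau))_{\Cal Y_t}$ for all $(z,\tau)\in\Cal Y_t$ (i.e.\ take the Riesz representative of $\sigma$ as datum), so that $\Cal B((\bar y,\bar p,0),\sigma;\bmu)=\norm{\sigma}_{\Cal Y_t}^2$ while well-posedness gives $\norm{(\bar y,\bar p)}_{\Cal Y_t}\leq C(\bmu)\norm{\sigma}_{\Cal Y_t}$; but the source of this bound is then the bounded invertibility of the space-time Stokes operator (Lemma \ref{lemma_infsup} plus Brezzi/Babu\v{s}ka theory), not coercivity plus Lemma \ref{lemma_1_6}. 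As written, your inf-sup step would not go through.
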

\begin{proof}
In order to prove existence and uniqueness of the solution of problem \eqref{Stokes_optimality system} and its equivalence with the minimization problem \eqref{func_Stokes}-\eqref{P_Stokes}, we exploit \A{Proposition} \ref{prop}, i.e. we show that:
\begin{itemize}
\item $\Cal A \cd$ is a symmetric positive definite continuous bilinear form
 coercive on $\Cal X_0$;
\item $\Cal B (\cdot, \cdot; \pmb \mu)$ is continuous and satisfies the \emph{inf-sup condition}.
\end{itemize}
The form $\Cal A \cd$ is trivially symmetric and positive definite thanks \fb{to} its definition. The continuity of the bilinear forms $\Cal A \cd$ and $\Cal B (\cdot, \cdot; \pmb \mu)$ can be derived using the same techniques already presented in theorem \ref{Brezzi_continuous}. Let us now focus on the coercivity of $\Cal A\cd$. For every $x \in \Cal X_0$ we want to prove that there exists a positive constant C such that
$$
\Cal A (x,x) \geq C \norm{x}_{\Cal X}^2.
$$
Since norm of $\norm{\cdot}_{\Cal X}^2$ is equivalent to $\norm{\cdot}_{\Cal V_t}^2 + \norm{\cdot}_{\Cal U}^2$ as we proved in Lemma \ref{lemma_equiv}, it is sufficient to show that
$$
\Cal A (x,x) \geq C (\norm{\cdot}_{\Cal V_t}^2 + \norm{\cdot}_{\Cal U}^2).
$$
When $x \in \Cal X_0$ it means that the \fb{pair} $(y,p)$ verifies the Stokes equations with the control as forcing term. It is well known that the velocity solution of Stokes equations with $\norm{y_0}_{L^2(\Omega(\mu_2))}=0$  and control as forcing term admits a $k(\bmu)>0$ such that 
$\norm{y}_{\Cal V_t} \leq k(\bmu)\norm{u}_{\Cal U}$, see \cite[Chapter 13]{quarteroni2008numerical}. Then for $x \in \Cal X_0$
\begin{align*}
\Cal A (x,x) & \geq \norm{y}_{L^2(0,T; H)}^2 + \alpha \norm{u}_{\Cal U}^2
\geq \alf \norm{u}_{\Cal U}^2 + \alf \norm{u}_{\Cal U}^2
\geq \frac{\alpha}{2k(\bmu)^2}\norm{y}_{\Cal V_t} ^2 + \alf \norm{u}_{\Cal U}^2 \\
& \geq \min \Big \{   \frac{\alpha}{2k(\bmu)^2}, \alf \Big \} (\norm{y}_{\Cal V_t}^2 + \norm{u}_{\Cal U}^2).
\end{align*}
It remains to show that
\begin{equation}
\label{claim_infsup}
\inf_{0 \neq q \in \Cal Y_t}
\sup_{0 \neq x \in \Cal X } \frac{\Cal B(x, q; \bmu)}{\norm{x}_{\Cal X}\norm{q}_{\Cal Y_t}} > \beta(\bmu) > 0.
\end{equation}
First of all, let us consider the bilinear form 
$\mathsf A \goesto {\Cal Y_t}{\Cal Y_t}{\mathbb R}$ defined as 
$\mathsf A((y,p), (w,q); \bmu) = \mathsf a(y,w) + \mathsf b(w,p) + \mathsf b(v,q)$. We can now proceed as presented in \cite[Appendix A.1]{negri2015reduced}.
Thanks to lemma \ref{lemma_infsup}, we infer that the operator $\mathsf A (\cdot, \cdot; \bmu)$ is a well-posed mixed problem, then the Babu$\check{s}$ka inf-sup constant $\beta_B(\bmu)$ is well defined as

\begin{equation}
\beta_B(\bmu) = \inf_{0 \neq (y,p) \in \Cal Y_t}
\sup_{0 \neq (w,q)\in \Cal Y_t} \frac{\mathsf A((y,p), (w,q); \bmu)}{\norm{(y,p)}_{\Cal Y_t}\norm{(w,q)}_{\Cal Y_t}} =
\inf_{0 \neq (w,q) \in \Cal Y_t}
\sup_{0 \neq (y,p)\in \Cal Y_t} \frac{\mathsf A((y,p), (w,q); \bmu)}{\norm{(y,p)}_{\Cal Y_t}\norm{(w,q)}_{\Cal Y_t}}, \\
\end{equation}
see the classical reference \cite{Babuska1971}. Then we have the following inequalities:
\begin{align*}
\sup_{0 \neq x \in \Cal X } \frac{\Cal B(x, \sigma; \bmu)}{\norm{x}_{\Cal X}\norm{\sigma}_{\Cal Y_t}}
& \underbrace{\geq}_{x = (y,p,0)}  \sup_{0 \neq (y,p)\in \Cal Y_t} \frac{\mathsf A((y,p), (w,q); \bmu)}{\norm{(y,p)}_{\Cal Y_t}\norm{(w,q)}_{\Cal Y_t}} \\
& \geq
\inf_{0 \neq (w,q) \in \Cal Y_t}
\sup_{0 \neq (y,p)\in \Cal Y_t} \frac{\mathsf A((y,p),(w,q); \bmu)}{\norm{(y,p)}_{\Cal Y_t}\norm{(w,q)}_{\Cal Y_t}} \geq \beta_B(\bmu) \\
\end{align*}
Since the above inequality does not depend on the choice of $\sigma \in \Cal Y_t$, then the inequality \eqref{claim_infsup} is verified and the \ocp $\;$ is well-posed. \\
\cvd 
\end{proof}
\no In this section we generalized the saddle point theory of \ocp s governed by parabolic state equations to \ocp s under Stokes equations constraint. After the introduction of the minimization problem, we provided a proof for the well-posedness of the \ocp $\;$ in the saddle point framework. The next \A{Section} will focus on the full order discretization of the \ocp $\;$ proposed. 
\subsection{Finite Element Discretization and Time Discretization for \ocp s governed by Stokes Equations: all-at-once approach}
\label{FE_problem_Stokes}
In this section we introduce the discretized version of time dependent \ocp s governed by Stokes equations following the properties of the schemes already presented in \cite{HinzeStokes,Stoll}.\\ 
\no In order to expose the discrete formulation of the optimality system \eqref{optimality_system_Stokes}, we need its strong formulation. The minimization of the functional \eqref{func_Stokes} constrained to time dependent Stokes equations \eqref{P_Stokes} is equivalent to find the \fb{pair} 
$(x, \sigma) \in \Cal X \times \Cal Y_t$ such that the state, the adjoint and the optimality equations read as follows: 
\begin{equation}
\label{Stokes_Strong}
\begin{cases}
\displaystyle \dt{y} - \mu_1 \Delta y + \nabla p = u & \text{\spazio in } \Omega(\mu_2) \times \B{(0,T)} , \\
\dive(y) = 0 & \text{\spazio in } \Omega(\mu_2) \times \B{(0,T)} , \\
y(t) = g(t) & \text{\spazio on  } \Gamma_D(\mu_2) \times  \B{(0,T)}, \\
\displaystyle \dn{y} = 0& \text{\spazio on  } \Gamma_N (\mu_2) \times  \B{(0,T)}, \\
y(0) = y_0 & \text{\spazio in } \Omega(\mu_2) \times  \{0\} , \\
\displaystyle y - \dt{\lambda} - \mu_1 \Delta \lambda + \nabla \xi = y_d & \text{\spazio in } \Omega(\mu_2) \times \B{(0,T)} , \\
\dive(\lambda) = 0 & \text{\spazio in } \Omega(\mu_2) \times \B{(0,T)},  \\
\lambda(t) = 0 & \text{\spazio on  } \partial \Omega(\mu_2) \times \B{(0,T)}  , \\
\lambda(T) = 0 & \text{\spazio in }\Omega(\mu_2) \times  \{T\} , \\
\alpha u = \lambda & \text{\spazio in } \Omega(\mu_2) \times \B{(0,T)}, \\
\text{boundary conditions} & \text{\spazio on $\partial{\Omega} \times \B{(0,T)}.$}
\end{cases}
\end{equation}
In order to discretize in time and in space the system \eqref{Stokes_Strong}, we will exploit the technique already presented in \A{Section} \ref{Algebraic_parabolic} for parabolic time dependent \ocp s. 
 \\ Let us define $\pmb y = [y_1, \dots, y_{N_t}]^T, \pmb p = [p_1, \dots, p_{N_t}]^T$, $\pmb u = [u_1, \dots, u_{N_t}]^T$ and $\pmb \lambda= [\lambda_1, \dots, \lambda_{N_t}]^T$, $\pmb \xi = [\xi_1, \dots, \xi_{N_t}]^T$, i.e. the vectors where $y_i, u_i$, $p_i$, $\lambda_i$ and $\xi_i$ for $1 \fb{\leq i} \leq N_t$ are \A{row vectors} representing the coefficients of the FE discretization of each time instance. Also in this case we divided the time interval in $N_t$ sub-intervals of length $\Delta t$.  The vector representing the initial condition for the state variable is 
$\pmb y_0 = [y_0, 0, \dots, 0]^T$. The discretized desired state and the forcing term are given by $\pmb y_d = [y_{d_{1}}, \dots, y_{d_{N_t}}]^T$ and $\pmb g = [g_1, \dots, g_{N_t}]^T$, respectively.
\\  In order to create the time steps, we used the rectangle composite quadrature formula following the same arguments as already presented in \A{Remark} \ref{time_int}.
The discretized state equation reads as follows:
\begin{equation}
\begin{cases}
My_k + \mu_1 \Delta t Ky_k + \Delta t  D^T p_k = \Delta t M u_k  +My_{k-1}  + \Delta t g_k & \B{\text{for }k \in 
\{1, 2, \dots, N_t\}}, \\
Dy_k = 0 &  \B{\text{for }k \in 
\{1, 2, \dots, N_t\}}, \\
\end{cases}
\end{equation} 
where $M$ and $K$ are the mass and the stiffness matrices relative to the FE discretization, respectively. Moreover, $D$ is the differential operator representing to the continuity equation. Then, in order to solve the system in an all-at-once approach we have to solve the following system:
\[
\Cal K [\pmb y, \pmb p] - \Delta t \Cal C \pmb u = \Cal M \pmb y_0 + \Delta t \pmb g,
\]
where:
\[
\Cal K =
\begin{bmatrix}
  M - \mu \Delta t K & 0  & \cdots & 0 & \Delta t D^T & 0 & \cdots &  &  & 0 \\ 
 D & 0  &  &  &  &  &  &  &  & \\ 
 -M &  M - \mu \Delta t K& 0  & \cdots  & 0 & \Delta t D^T & 0 & \cdots & & 0 \\ 
0 & D &  &  &  &  &  &  &  &  \\ 
&   \ddots& \ddots & & &  &  &  &  &   \\ 
&  &  \ddots& \ddots & &  &  &  &  &   \\ 
 &  &  &\ddots& \ddots &  &  &  &  &   \\ 
 &  & & &    \ddots & \ddots &  & & &   \\ 
 0 & \cdots &  &  0 & -M &  M - \mu \Delta t K& 0 & \cdots & 0 & \Delta t D^T \\ 
 0 &   \cdots &  &  &  &  D &  & 0& \cdots& 0
\end{bmatrix}
\]
and $\Cal C = \Cal M \in \mathbb R^{\Cal N \cdot N_t} \times \mathbb R^{\Cal N \cdot N_t}$ is the \A{block} diagonal matrix which \A{diagonal} entries are $[M, \cdots, M]$. \\
Then, we discretize the optimality equation, which has the following form:
\begin{align*}
\alpha \Delta t M u_k - \Delta t M \lambda_k = 0 & \spazio \B{\text{for } k \in \{1,2, \dots, N_t \}}.
\end{align*} This means that the system considered is of the form:
$$
\alpha \Delta t \Cal M \pmb u - \Delta t \Cal C^T  \pmb \lambda = 0.
$$
Finally, we are able to treat the adjoint equation. We use a forward Euler as we did for the parabolic case in \A{Section} \ref{Algebraic_parabolic}. Thanks to this approach, it holds:
\begin{equation}
\begin{cases}
M \lambda_{k}  = M \lambda_{k+1}  + \Delta t ( - M y_{k} + \mu_1 \Delta t K\lambda_{k}  - \Delta t  D^T \xi_{k} + My_{d_{k}} )
& \B{\text{for } k \in \{ N_t - 1, N_t -2, \dots, 1\}}, \\
D\lambda_{k} = 0  & \B{\text{for } k \in \{ N_t - 1, N_t -2, \dots, 1\}}.
\end{cases}
\end{equation}
In the end, the adjoint equation has the following form:
$$
\Delta t {\Cal M} \pmb y + \Cal {K}^T [\pmb \lambda, \pmb \xi] = \Delta t \Cal M \pmb y_d.
$$
Then, the final parametrized system to be solve trough a one shot approach is the of the form presented in \A{\eqref{one_shot_system}}, i.e.:
\begin{equation}
\label{Stokes_one_shot_system}
\begin{bmatrix}
\Delta t {\Cal M} & 0 & \Cal {K}^T  \\
0 & \alpha \Delta t \Cal M &  -\Delta t \Cal C^T \\
\Cal K & -\Delta t \Cal C & 0 \\
\end{bmatrix}
\begin{bmatrix}
 (\pmb y,\pmb p) \\ 
\pmb u \\
(\pmb \lambda,\pmb \xi) \\
\end{bmatrix} 
= 
\begin{bmatrix}
\Delta t {\Cal {M}} \pmb y_d  \\
0 \\
 {\Cal {M}} \pmb y_0 +\Delta t \pmb g \\
\end{bmatrix}.
\end{equation}

\no Now, calling 
\begin{align*}
A = 
\begin{bmatrix}
\Delta t {\Cal M} & 0 \\
0 & \alpha \Delta t \Cal M \\
\end{bmatrix},
\quad B = 
\begin{bmatrix}
\Cal K & -\Delta t \Cal C \\
\end{bmatrix},
\quad F =
\begin{bmatrix}
\Delta t {\Cal {M}} \pmb y_d  \\
0 \\
\end{bmatrix}
\quad \text{and } \quad
G = {\Cal {M}} \pmb y_0 +\Delta t \pmb g
\end{align*}
the system \eqref{one_shot_system} can be written as follows:
\begin{equation}
\label{Stokes_one_shot_system_saddle_point}
\begin{bmatrix}
A & B^T \\
B & 0 \\
\end{bmatrix}
\begin{bmatrix}
\pmb x \\
\pmb \chi
\end{bmatrix} 
=
\begin{bmatrix}
F \\
G
\end{bmatrix},
\end{equation}
where $\pmb x = ((\pmb y, \pmb p), \pmb u) $ and $\pmb \chi= (\pmb \lambda, \pmb \xi)$. \\
As in the parabolic case, we used a direct approach in order to find the solution to the saddle point system
\eqref{Stokes_one_shot_system_saddle_point}, even if iterative solvers are widely exploited for this kind problem: the interested reader may refer to  \cite{HinzeStokes,schoberl2007symmetric,Stoll}. \\
We now move towards ROM version of the system, focusing on supremizers aggregated space technique \cite{negri2015reduced}  which guarantees the well-posedness of the reduced optimality system.
\subsection{Reduced basis method: Supremizer Stabilization and Aggregated Spaces}
\label{aggregated_space_Stokes}
In this section, we exploit a POD-Galerkin strategy in order to recast the full order optimality system in a reduced framework. The technique used is the same already presented in \A{Section} \ref{PODsec}: we remark that in this case we apply the POD algorithm for each space-time FE variables
$v\disc, p\disc, u\disc, \lambda \disc$ and $\xi \disc$, which contain all the temporal instances. \\ Let us suppose to have built the reduced spaces with the POD approach, then the reduced optimality system reads: given $\pmb \mu \in \mathscr P$, find $(x_N(\pmb \mu), \sigma _N (\pmb \mu)) \in \Cal X_N \times \Cal Y_N$ such that
\begin{equation}
\label{CarolineStoRB}
\begin{cases}
\Cal A(x_N (\pmb \mu), w_N) + \Cal B(w_N, \sigma_N(\bmu); \pmb \mu) = \la F(\pmb \mu), w_N \ra & \forall w_N\in \Cal X_N, \\
\Cal B(x_N(\pmb \mu), \zeta_N; \pmb \mu) = \la G(\pmb \mu), \zeta_N\ra & 
\forall \zeta_N \in \Cal Y_N.
\end{cases}
\end{equation}
Anyway, in this case we are managing  a nested saddle point structure, since we are dealing with Stokes equations as constraints. Indeed, the state equation problem is formulated as: 
given $\pmb \mu \in \mathscr P$, find $(y_N(\pmb \mu), p_N (\pmb \mu)) \in \Cal V_{t_N} \times \Cal P_N$ such that
\begin{equation}
\label{StokesROM}
\begin{cases}
\mathsf a(y_N (\pmb \mu), z_N; \pmb \mu) + \mathsf b(z_N, p_N(\bmu)) = 0 & \forall z_N\in \Cal V_{t_N}, \\
\mathsf b(y_N(\pmb \mu), \zeta_N) = 0 & 
\forall \zeta_N \in \Cal P_N,
\end{cases}
\end{equation}
where the bilinear forms $\mathsf a(\cdot, \cdot; \pmb \mu)$ and $\mathsf b \cd$ are defined as \eqref{a_Stokes} and \eqref{b_continuity}, respectively, and $\Cal V_{t_N}$ and $\Cal P_N$ are the reduced spaces obtained trough POD algorithm over velocity and pressure variables. It is well known that, in order to guarantee the well-posedness of the reduced Stokes state equations \eqref{StokesROM}, the \emph{reduced inf-sup stability \A{condition}} is required for the \A{bilinear form} $\mathsf b(\cdot, \cdot)$ and it is not directly inherited from the FE approximation. In other words, we have to build our reduced spaces for velocity and pressure variables such that there exists a $\bar{\beta}_N > 0$ which verifies
\begin{equation}
\label{infsup_stokes}
\inf_{p_N \in \Cal P_N} \sup_{y_N \in {{\Cal V}_t}_N}
\frac{\mathsf  b(y_N, p_N; \pmb \mu)}{ \norm {y_N}_{{\Cal V}_t}\norm {p_N}_{\Cal P}} \geq \bar{\beta}_N (\pmb \mu)  > 0 \hspace{1cm} \forall \pmb \mu \in \mathscr P.
\end{equation}
To ensure the inequality \eqref{infsup_stokes} at the reduced level, we follow the strategy of pressure supremizers \cite{rozza2007stability}: let us consider the supremizer operator
$T^{\pmb \mu}_p: \Cal P \disc \rightarrow{{\Cal V}_t} \disc$ defined as follows:
\begin{equation}
(T^{\pmb \mu}_p s, \phi)_{\Cal{V}_t} = \mathsf  b(\phi, s; \pmb \mu), \hspace{1cm} \forall \phi \in {\Cal V}_t \disc.
\end{equation}
Then, we enrich the reduced velocity space with supremizers and we build a new space as follows:
$$
{{\Cal V}_t}_N ^{\pmb \mu} = \text{span}\{ y \disc (\pmb \mu^n), \; T^{\pmb \mu^n}_p p\disc(\pmb \mu^n), \; n = 1,\dots,N\}.
$$
If we exploit this new reduced space ${{\Cal V}_t}_N ^{\pmb \mu}$ for velocity, then \eqref{infsup_stokes} is verified.
\\Once proved the stability of the Stokes equations, we can focus on the whole OCP($\pmb \mu$) governed by this particular state equation. In order verify the \emph{reduced inf-sup condition} for the bilinear form 
$\Cal B(\cdot, \cdot; \pmb \mu)$, we used the aggregated spaces strategy already presented in \A{Section} \ref{agg}. As shown in \cite{negri2015reduced}, we define the aggregated spaces for both the state and adjoint pressure variables as
\begin{equation}
\Cal P_N = \text{span}\{p \disc (\pmb \mu^n), \xi \disc (\pmb \mu^n), \; n = 1, \dots, N\},
\end{equation}
\no while, for the state and the adjoint velocity variables, we consider
\begin{equation}
{{\Cal V}_t}_N^{\pmb \mu^n} = \text{span } \{y \disc (\pmb \mu^n), T^{\pmb \mu^n}_p p\disc(\pmb \mu^n), 
\lambda \disc (\pmb \mu^n), T^{\pmb \mu^n}_p \xi \disc(\pmb \mu^n), \; n = 1, \dots, N\}.
\end{equation}
Let us now define the following \emph{aggregated} space for state and adjoint velocity-pressure variables
$$
\Cal Y_N = {{\Cal V}_t}_N^{\pmb \mu^n} \times \Cal P_N.
$$
\no Finally, the control space is:
\begin{equation}
\Cal U_N = \text{span }\{u \disc(\pmb \mu^n), \; n = 1, \dots, N\}.
\end{equation}
Considering the product space $\Cal X_N = \Cal Y_N \times \Cal U_N$, the well-posedness of the reduced optimality system \eqref{CarolineStoRB} in its nested saddle point structure is verified.
\no The stability of these techniques is numerically demonstrated in \cite{ballarin2015supremizer,gerner2012certified,negri2015reduced,RozzaHuynhManzoni2013,rozza2012reduction}. Thanks to this new reduced spaces formulation, the dimension of the state and the adjoint spaces is $6N$, whereas the control space has dimension $N$, for a total reduced dimension $N_{\text{tot}}=13N$.
In the next section, we are going to test this methodology in a numerical \A{simulation}, showing that ROM online phase could be of great advantage, despite the increasing value of $N_{\text{tot}}$.

\section{Numerical Results: Time Dependent \ocp $\;$ for a Cavity Viscous Flow}
In this section we propose a geometrical and physical parametrized version of the \A{test case} already presented in \cite{HinzeStokes,Stoll}: it is a time dependent \ocp $\,$ for cavity viscous flow problem. The parametrization presented has been studied also in \cite{ali2018stabilized}, where several stabilization techniques are proposed. In this case, we deal with the consistent FE pair $\mathbb P^2-\mathbb P^1$  for velocity and pressure variables, respectively: then, no stabilization is needed except for the supremizer techniques described in \A{Section} \ref{aggregated_space_Stokes}.  For the discretization of the control variable, we exploited $\mathbb P^2$ polynomials for its FE approximation. \B{We call $\Cal N_y$, $\Cal N_p$ and $\Cal N_u$ the FE dimensions for velocity, pressure and control space, respectively. Our space discretization leads to $\Cal N_y = \Cal N_u = 4554$ and $\Cal N_p = 591$. Then we define the global FE dimension as $\Cal N = 2\Cal N_y + 2\Cal N_p + \Cal N_u$.}
The \ocp $\;$ is based on the minimization of the functional \eqref{func_Stokes} with \eqref{P_Stokes} as state equation. We now provide the information about the \A{numerical test}, which are resumed in \A{Table} \ref{table_data_Stokes}. First of all, let us specify the role of the parameters: $\mu_1$ is physical parameter describing the diffusivity action of the system, while $\mu_2$ changes the geometry of the problem, stretching the length of the domain. In order to deal with the geometrical parametrization, we traced back $\Omega(\mu_2) \subset \mathbb R^2$ into a two dimensional square \B{$ (0,1) \times (0,1)$} as reference domain,  where $\Gamma_{IN} = \B{(0,1)}\times \{1 \}$ is the inlet boundary and $\Gamma_D = \partial \Omega \setminus \Gamma_{IN}$ is characterized by homogeneous Dirichlet boundary conditions. The reference domain structure is presented in \A{Figure} \ref{domain_Stokes}. 
 Since we considered Dirichlet boundary conditions all over the domain, we assumed $p \in \Cal P = L^2(0,T; L^2_0(\Omega(\mu_2)))$\footnote{
The space $L^2_0(\Omega(\mu_2))$ is made by functions  $p \in L^2(\Omega(\mu_2))$ which satisfy 
$\displaystyle \intSpace p = 0$. In the reduced model the aggregated basis associated to the state and adjoint pressure was built in order to satisfy this constraint, i.e. the reduced adjoint variable $\xi_{N}$ verifies 
$\displaystyle \intSpace {\xi_{N}}= 0$.  
}. The aim of the \ocp $\,$ proposed is to make the state velocity $y$ the most similar to a target velocity $y_d$, given $ \pmb \mu \in \mathscr P = [10^{-3}, 10^{-1}] \times [0.5, 2.5] $. The target velocity profile $y_d$ is defined as the FE solution of the uncontrolled time dependent Stokes equations \A{with Dirichlet boundary conditions given by the constant velocity components $(1,0)$ in $\Gamma_{IN}$ and homogeneous Dirichlet boundary conditions in $\Gamma_D$, for
$\mu_1 = 1$ fixed}. The target velocity has been simulated in $\B{(0,T)} = \B{(0,1)}$. 
\no For the \ocp $\,$ test case, we consider a different time dependent inlet  boundary condition: the state velocity profile is 
$\displaystyle y = \Big (1 + \half \cos (4\pi t - \pi), 0 \Big )$ over $\Gamma_{IN}$ for $t \in \B{(0,T)} = \B{(0,1)}$. As already specified, homogeneous Dirichlet boundary conditions are considered elsewhere. The distributed control $u$ has the role to reduce the impact of the periodic inlet over the system.
\no The FE discretization is performed as presented in \A{Section} \ref{FE_problem_Stokes}. For the time discretization we used $\Delta t = 0.05$, resulting in $N_t = 20$ time instances. The total number of degree of freedom is \B{$\Cal N_{\text{tot}} = N_t \times \Cal N = 296'880$}. In order to reduce the dimension of the FE system, we applied the POD-Galerkin algorithm described in \A{Section} \ref{PODsec}. For all the variables involved, we choose $N_{\text{max}} = 70$ snapshots for the correlation matrix and we only retain the first $N = 25$ eigenvectors as basis functions of our low-dimensional spaces. Once again, we will call $N$ the \emph{basis number}. As already specified in \A{Section} \ref{ADR_OCP}, the choice of the number of snapshots $N_{\text{max}}$ is strongly related to the complexity of the offline phase of the time dependent optimality system, which turned out to be very expensive to be solved. The total dimension of the reduced system is $N_{\text{tot}}= 13N = 325$, tanking into account supremizer aggregated space technique for the nested saddle point problem given by the Stokes equations and the optimal control structure, as already specified in \A{Section} \ref{aggregated_space_Stokes}. A comparison between FE and ROM state velocity and state pressure profiles is shown in \A{Figure} \ref{simulation_Stokes}: the ROM simulations recover the behaviour of the FE solutions for different time instances. The accuracy of the method is also underlined by the plots in \A{Figure} \ref{error_Stokes}, where the average relative error\footnote{The error for state velocity and pressure, control and adjoint velocity and pressure variables are: $\norm{y\disc - y_N}^2_{H^1}$, $\norm{p\disc - p_N}^2_{L^2}$, $\norm{u\disc - u_N}^2_{L^2}$, $\norm{\lambda \disc - \lambda_N}^2_{H^1}$ and $\norm{\xi\disc - \xi_N}^2_{L^2}$, respectively. We underline that in order to make the FE and ROM adjoint pressures comparable we define 
$\xi \disc$ := $\bar \xi \disc - \displaystyle \int_{\Omega} \bar \xi \disc dx$, where $\bar \xi \disc$ is the actual truth solution.} plots
over a testing set of 35 \B{uniformly distributed} parameters are presented: the greater is the value of $N$ the better are the results, as expected. The relative error is about $10^{-3}$ for all the involved variables. In \A{Figure} \ref{error_Stokes}, we also show the relative error between the value of the functional \emph{output} \eqref{func_Stokes} evaluated for the FE solution and for the ROM solution: it reaches the very low value of $10^{-8}$. 
Also in this test case, ROM is a strategy which reaches accurate results in a small computational time. In \A{Figure} \ref{speedup_Stokes} we show the \emph{speedup} index for this test case: the POD-Galerkin approach could be very useful for time dependent \ocp s governed by Stokes equations, since it guarantees a good approximation of a quite complex system in low-dimensional framework. The number of reduced problem one can perform in a single FE simulation decreases with respect to the value of $N$, but still it is never below the value of $6 \cdot 10^{4}$. Also in this case, from the high speedup values, we can conclude that ROM is a good approach to manage several repeated simulations for different values of the parameter considered.
\begin{figure}[H]
\hspace{-2cm}\includegraphics[scale = .3]{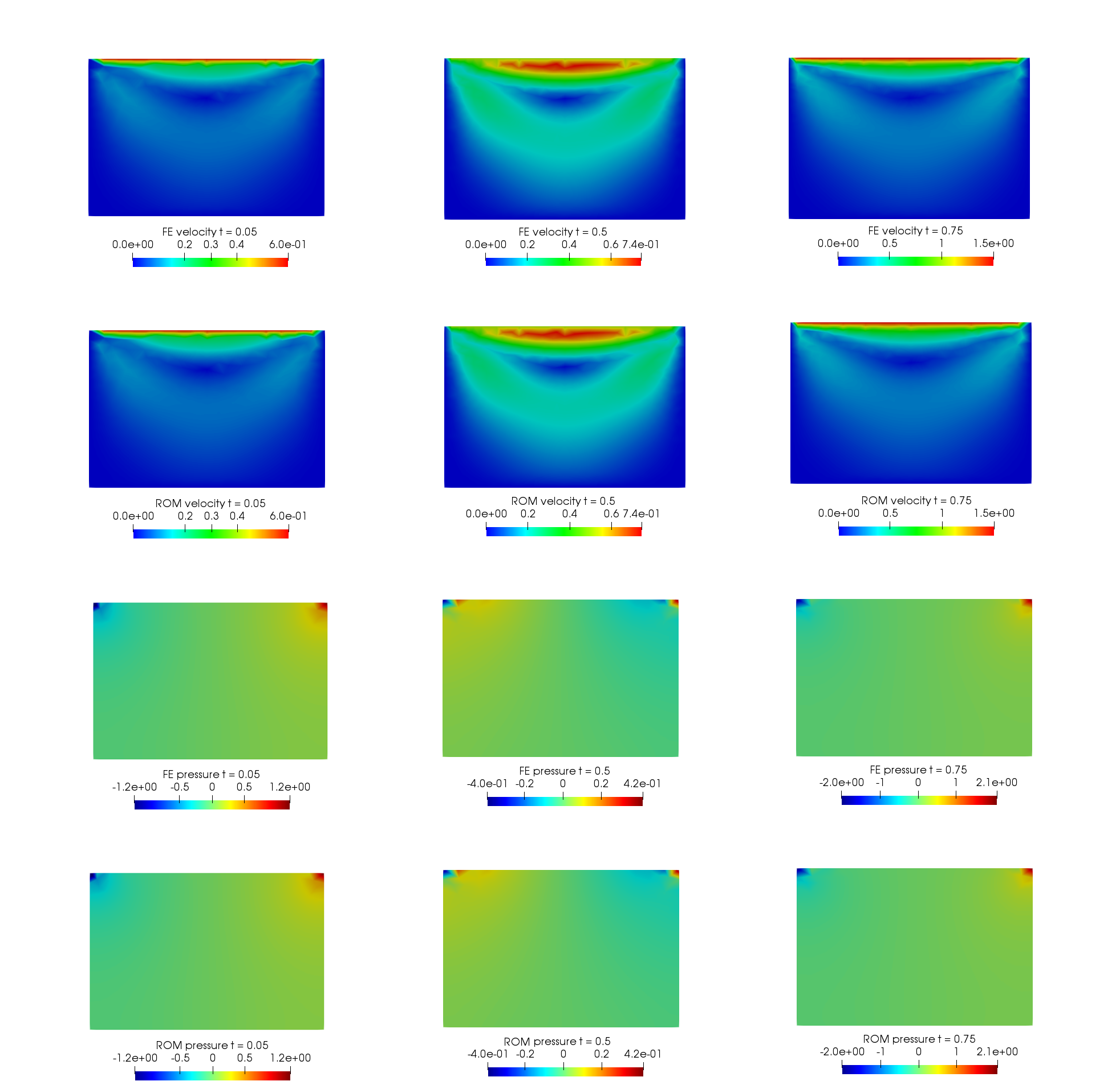}
\caption{FE vs ROM simulations of state velocity and pressure  variables for $\pmb \mu = [1/2, 10^{-2}]$ and $\alpha = 10^{-2}$. First row: FE state velocity for $t=0.05,0.5,0.75$; second row: ROM state velocity for $t=0.05,0.5,0.75$; third row: FE state pressure for $t=0.05,0.5,0.75$; fourth row: ROM state pressure for $t=0.05,0.5,0.75$.}
\label{simulation_Stokes}
\end{figure}
\begin{figure}[H]
\begin{center}
\includegraphics[scale = .25]{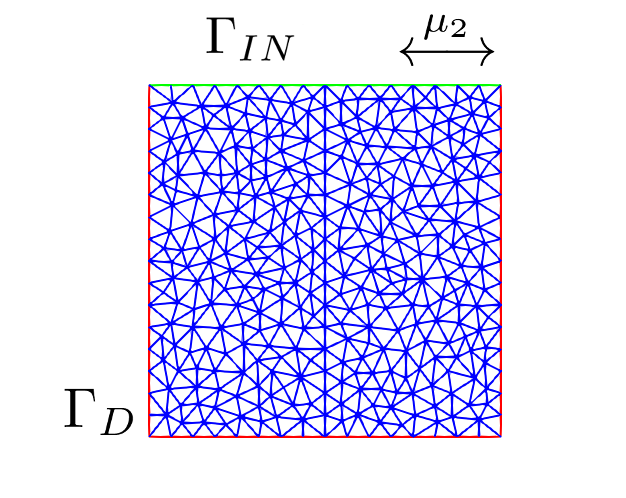}
\caption{Domain and boundary considered.}
\label{domain_Stokes}
\end{center}
\end{figure}
\begin{center}
\begin{table}[H]
\caption{Data for the \ocp  $\:$ governed by a Stokes equation.}
\label{table_data_Stokes}
\centering
\begin{tabular}{ c | c }
\toprule
\textbf{Data} & \textbf{Values} \\
\midrule
$\mathscr P$ & $[0.5, 2.5] \times [10^{-3}, 10^{-1}]$\\
\midrule
 $(\mu_1, \mu_2, \alpha)$ &  $(10^{-2}, 3/2, 10^{-2})$ \\
\midrule
\A{$N_{\text{max}}$} & 70 \\ 
\midrule
Basis Number $N$ & 25 \\
\midrule
Sampling Distribution & Uniform \\
\midrule
{$\Cal N_{\text{tot}}$} & $\B{296'880}$ \\
\midrule
\A{$ N_{t}$} & \A{$20$} \\
\midrule
ROM System Dimension & $325$ \\
\bottomrule
\end{tabular}
\end{table}
\end{center}

\begin{figure}[H]
\begin{center}
\includegraphics[scale = .33]{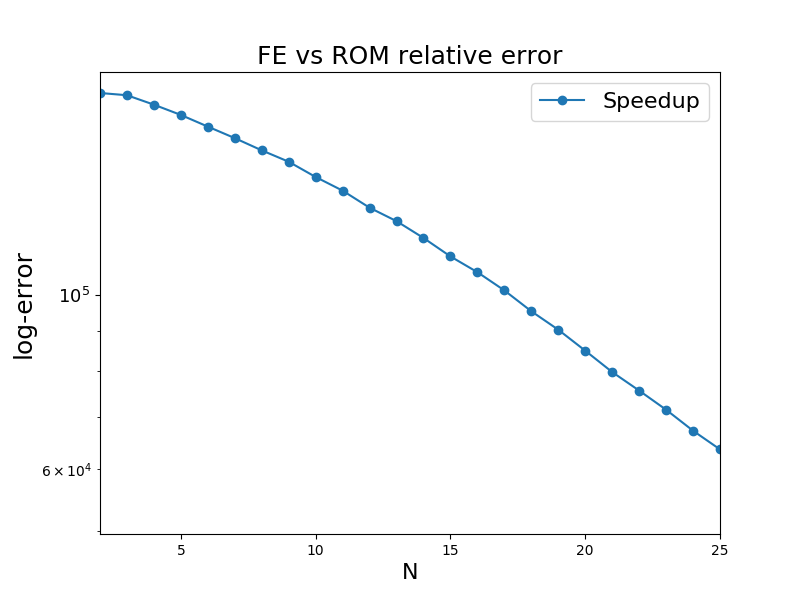}
\caption{Speedup analysis in logarithmic scale shown with respect to the reduced basis dimension $N$.}
\label{speedup_Stokes}
\end{center}
\end{figure}
\begin{figure}[H]
\begin{center}
\includegraphics[scale = .33]{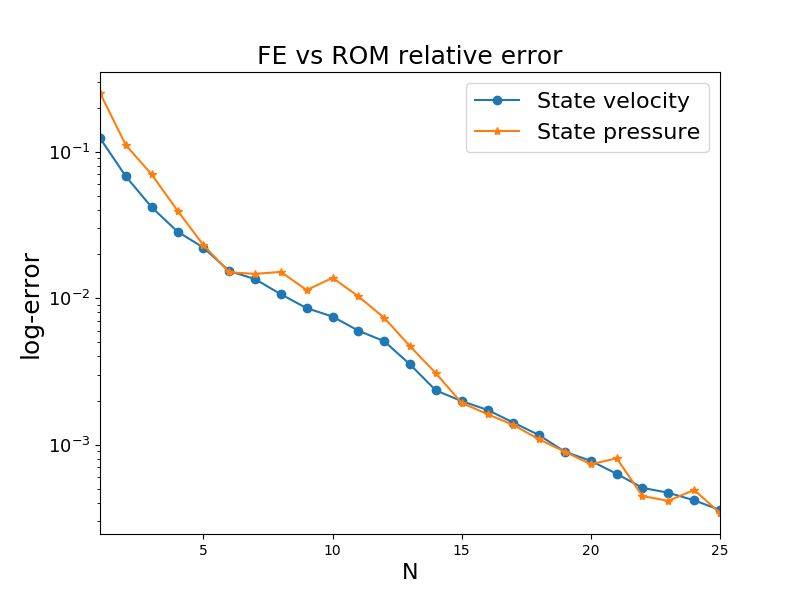}\includegraphics[scale = .33]{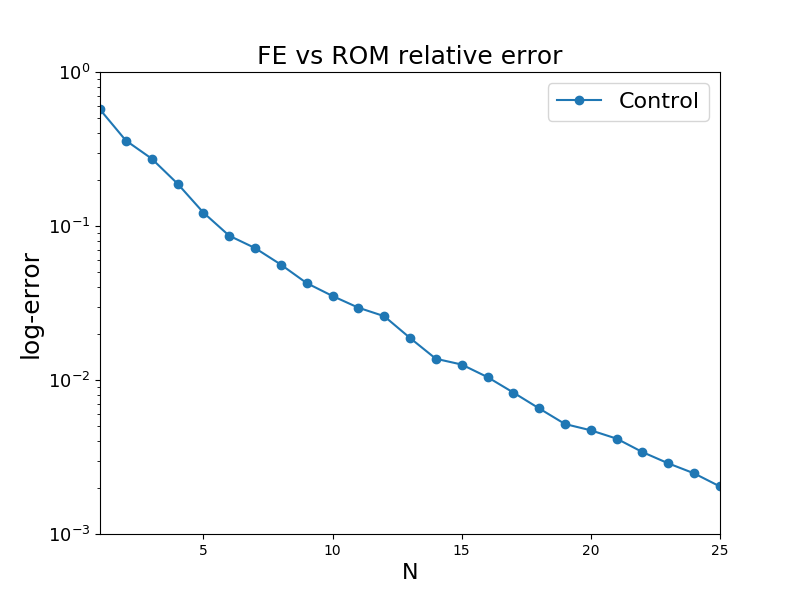}\\
\includegraphics[scale = .33]{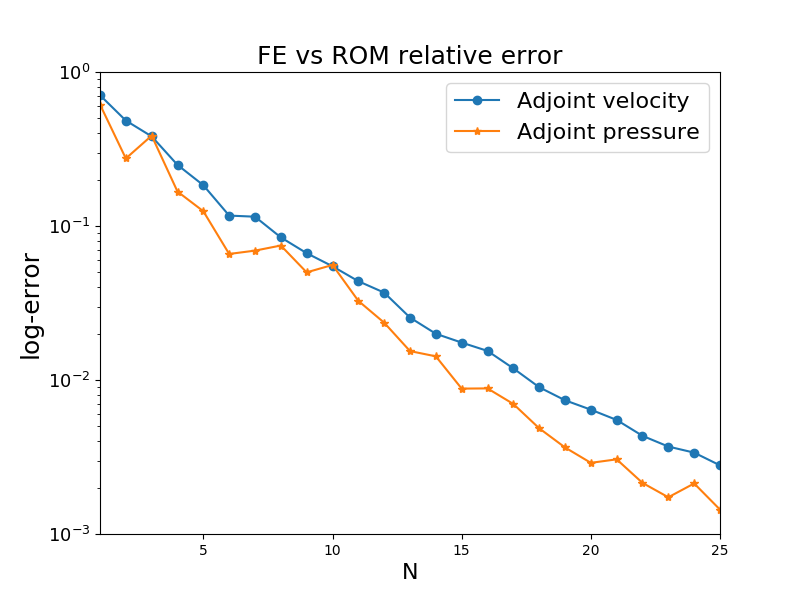}\includegraphics[scale = .33]{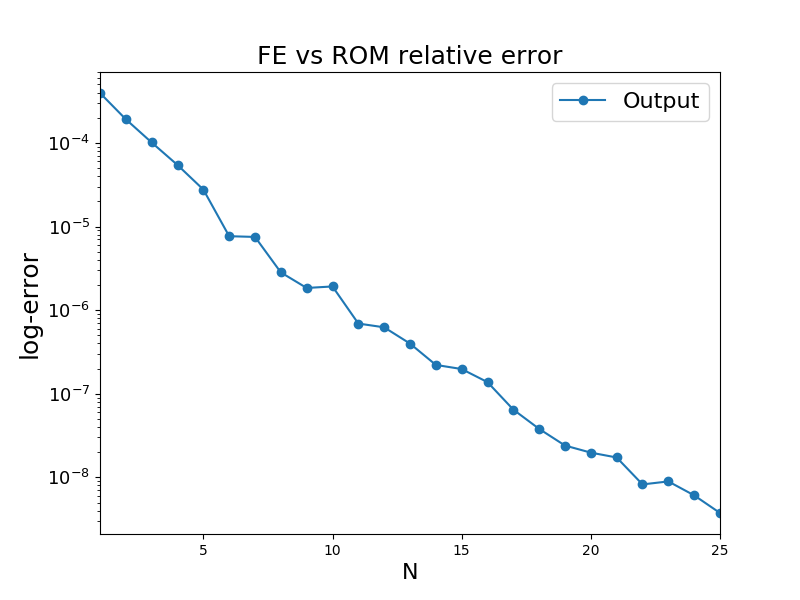}
\vspace{-5mm}
\caption{FE vs ROM errors in logarithmic scale. Top left: state error (velocity and pressure), top right: control error, bottom left: adjoint error (velocity and pressure), bottom right: output error.}
\label{error_Stokes}
\end{center}
\end{figure}




\section{Conclusions and Perspectives}
\label{conc}
In this work we have exploited ROM in \ocp s dealing with time dependent linear state equations. First of all we showed how the saddle point structure for linear quadratic \ocp $\:$ is maintained also for the time dependent case. Then, we have exploited a POD-Galerkin method as sampling strategy  for the projection of the systems in a low dimensional framework in order to solve them in an accurate and fast way. The results have been tested, on one side, for a boundary \ocp $\,$ governed by a Graetz flow, on the other side, the test case has been performed over an \ocp $\,$ constrained to time dependent Stokes equations. To the best of our knowledge, the main novelty of this work is in the POD-Galerkin reduction used for this all-at-once time dependent optimal control problems recast in saddle point formulation: it is a very versatile approach due to the great speedup index, for both the state equations analyzed.\\
Some improvements of this work \A{will} follow. \both{First of all, we would like to stress that the used all-at-once approach is not the most appropriate. Indeed, it leads to a very costly formulation and complex linear systems to deal with. This issue can be overcome thanks to more computing power and/or preconditioned iterative strategies as proposed in \cite{benzi_golub_liesen_2005,schoberl2007symmetric,Stoll1,Stoll}. Some improvements are needed in this direction, since the direct solution of such a system affects not only the resolutions of full order solutions and, consequently, of the reduced ones, but also the time needed for the offline phase. } \BB{ Indeed, the proposed all-at-once formulation is the major reason for an expensive basis construction. Even if we accomplished our goal of showing how ROMs can be very effective in lowering the computational time needed for online parametrized simulations, we pay a huge amount of computational resources for the offline stage. We propose our methodology as a starting point which needs a deeper analysis and improvements in order to reach better performances: overcaming this issue will pave the way to applications of ROMs to more complicated \ocp s.}
\\ Moreover, we are moving towards nonlinear state equations in order to recover a complete optimal control model that could have impact in several fields of applications. Time dependent nonlinear \ocp s could be a way to simulate more and more realistic physical phenomena. Reduced order modelling, most of all in its nonlinear time dependent formulation, could actually be a suitable and versatile approach to be used, in order to drastically reduce the computational costs in real time contexts.
Another step forward could be adding a parameter stochastic dependency and uncertainty quantification in the optimality system.
For sure, another improvement is based on the development of proper error estimators. The extension of classical estimates for steady linear quadratic \ocp s to time dependent state equations will be the topic of future investigations.

\section*{Acknowledgements}
We acknowledge the support by European Union Funding for Research and Innovation -- Horizon 2020 Program -- in the framework of European Research Council Executive Agency: Consolidator Grant H2020 ERC CoG 2015 AROMA-CFD project 681447 ``Advanced Reduced Order Methods with Applications in Computational Fluid Dynamics''. We also acknowledge the INDAM-GNCS project ``Advanced intrusive and non-intrusive model order reduction techniques and applications''.
The computations in this work have been performed with RBniCS \cite{rbnics} library, developed at SISSA mathLab, which is an implementation in FEniCS \cite{fenics} of several reduced order modelling techniques; we acknowledge developers and contributors to both libraries.
\bibliographystyle{plain} 

\addcontentsline{toc}{chapter}{Bibliography}
\bibliography{BIB}

\end{document}